\numberwithin{equation}{section}
\theoremstyle{plain}
\newtheorem{theorem}[subsubsection]{Theorem}
 \newtheorem{proposition}[subsubsection]{Proposition}
 \newtheorem{corollary}[subsubsection]{Corollary}
 \newtheorem{conjecture}[subsubsection]{Conjecture}
 \theoremstyle{definition}
\newtheorem{remark}[subsubsection]{Remark}
\newcommand{\CC}{\mathbb{C}}
\newcommand{\PP}{\mathbb{P}}
\newcommand{\ZZ}{\mathbb{Z}}
\newcommand{\bfS}{\mathbf{S}}
\newcommand{\calF}{\mathcal{F}}
\newcommand{\calG}{\mathcal{G}}
\newcommand{\calO}{\mathcal{O}}
\newcommand{\calB}{\mathcal{B}}
\newcommand{\calP}{\mathscr{P}}
\newcommand{\calL}{\mathcal{L}}
\newcommand{\frb}{\mathfrak{b}}
\newcommand{\frn}{\mathfrak{n}}
\newcommand{\bclC}{\bar{\mathcal{C}}}
\newcommand{\supp}{\textup{supp}}
\newcommand{\Ind}{\textup{Ind}}
\newcommand{\ind}{\textup{ind}}
\newcommand\Lie{\textup{Lie}\ }
\newcommand\Mod{\textup{Mod}}
\newcommand{\Tr}{\textup{Tr}}
\newcommand\Hom{\textup{Hom}}
\newcommand{\Ad}{\textup{Ad}}
\newcommand{\Id}{\mathrm{Id}}
\newcommand{\quash}[1]{}
\newcommand{\ep}{\epsilon}
\newcommand{\Br}{\mathfrak{Br}}
\newcommand{\Wr}{\overline{W}}
\newcommand{\Hilb}{\textup{Hilb}}
\newcommand{\bHilb}{\mathbf{\textup{Hilb}}}
\newcommand{\MF}{\mathrm{MF}}
\newcommand{\calXr}{\overline{\mathcal{X}}}
\newcommand{\calX}{\mathcal{X}}
\newcommand{\MFs}{\mathrm{MF}}
\newcommand{\calZ}{\mathcal{Z}}
\newcommand{\calC}{\mathcal{C}}
\newcommand{\calY}{\mathcal{Y}}
\newcommand{\cox}{\mathrm{cox}}
\newcommand{\frh}{\mathfrak{h}}
\newcommand{\CE}{\mathrm{CE}}
\newcommand{\odel}{\stackon{$\otimes$}{$\scriptstyle\Delta$}}
\newcommand{\bst}{\bar{\star}}
\def\IR{ \mathbb{R} }
\def\ZZ{\mathbb{Z} }
\def\MFs{\mathrm{MF}}
\def\xId{ \mathbbm{1}}
\def\xIdv#1{ \xId_{#1}}
\def\kmtr#1{ \begin{bmatrix} #1 \end{bmatrix} }
\def\Zt{ \ZZ_2 }
\def\xD{ D }
\def\xxM{ M }
\def\xxV{ V }
\def\xdl{ d_{\mathrm{l}} }
\def\xdr{ d_{\mathrm{r}} }
\def\Ksz{ \mathrm{K} }
\def\Kszvvv#1#2#3{ \Ksz(#1;#2,#3) }
\def\ktht{\theta}
\def\exbv#1{b_{#1}}
\def\exbtt{\exbv{22}}
\def\exbv#1{b_{#1}}
\def\exbtt{\exbv{22}}
\def\exbhh{ \exbv{33}}
\newenvironment{dedication}
    {\vspace{6ex}\begin{quotation}\begin{center}\begin{em}}
    {\par\end{em}\end{center}\end{quotation}}
\title{HOMFLYPT homology of Coxeter links}
\author{A. Oblomkov}
\address{
A.~Oblomkov\\
Department of Mathematics and Statistics\\
University of Massachusetts at Amherst\\
Lederle Graduate Research Tower\\
710 N. Pleasant Street\\
Amherst, MA 01003 USA
}
\email{oblomkov@math.umass.edu}
\author{L. Rozansky}
\address{
L.~Rozansky\\
Department of Mathematics\\
University of North Carolina at Chapel Hill\\
CB \# 3250, Phillips Hall\\
Chapel Hill, NC 27599 USA
}
\email{rozansky@math.unc.edu}
\begin{document}
\maketitle

 \begin{dedication} Dedicated to the memory of Jim Humphreys.\end{dedication}

\begin{abstract}
A Coxeter link is a closure of a product of two braids, one being a quasi-Coxeter element and the other being a product of partial full twists.
This class of links includes torus knots \(T_{n,k}\) and torus links \(T_{n,nk}\).
We identify the knot homology of a Coxeter link with the space of sections of a particular line bundle on a natural generalization   of the punctual locus inside the flag Hilbert  scheme of points in \(\CC^2\).
 \end{abstract}

\section{Introduction}
\label{sec:introduction}
In the seminal paper \cite{Jones} Jones introduced what would be later called the HOMFLYPT polynomial invariant \(P(L)\) of a link \(L\) in $\IR^3$.
Besides the definition, the paper has many amazing results and computations. In particular, the section 9
of \cite{Jones} contains a proof of a formula for the HOMFLYPT invariant of torus knots \(T_{m,n}\). Later the HOMFLYPT invariant was
upgraded to the homology theory \cite{KhR08a,KhR08b}.
In this paper we demonstrate that the Jones formula has a natural generalization to the homology theory for a special class of torus links.

\def\CCq{ \CC^*_q }
\def\CCt{ \CC^*_t}
\def\CCqt{ \CC^*_{q,t}}

Consider the plane $\CC^2$ with the action of the group $\CC^*$ denoted as $\CCq$:  \(\lambda\cdot(x,y)=(\lambda x,\lambda^{-1}y).\) This action extends to the Hilbert scheme \(\Hilb_n(\CC^2)\) which is a variety of ideals \(I\subset\CC[x,y]\) of
codimension \(n\).
%
%
%
The tautological vector bundle \(\calB\) whose fiber over \(I\) is the vector space dual to  \(\CC[x,y]/I\) is naturally $\CCq$-equivariant. Combining the localization
formula of Atiyah and Bott \cite{AB} with the result of Haiman \cite{Hai} we get an algebro-geometric version of the Jones formula:
%
%
\[P(T_{1+kn,n})=\sum_{i=0}^n\dim_q\left(\mathrm{H}^0\bigl(\Hilb_n(\CC^2),\calO_Z\otimes L^k\otimes\Lambda^i\calB\bigr)\right)a^i,\]
where \(Z\subset\Hilb_n(\CC^2)\) is the punctial Hilbert  scheme  consisting of ideals \(I\) with support  at \((0,0)\in \CC^2\),
and \(\dim_q\) is the dimension graded by $\CCq$-weights.

Many authors \cite{AS12,GN15,GORS12,ORS12} suggested that the Poincare polynomial $ \calP(T_{1+kn,n})$ of the triply graded HOMFLYPT homology~\cite{KhR08a,KhR08b} has a similar interpretation, if one
augments the action of $\CCq$ to that of $T_{sc}=\CCq\times\CCt$, where $\CCt$: \( \mu\cdot(x,y) = (x,\mu^2 y)\), and uses the $\CCqt$-weighted dimension:
%
\begin{equation}\label{eq:torushoms}
  \calP(T_{1+kn,n})=\sum_{i=0}^n\dim_{q,t}\left(\mathrm{H}^0\bigl(\Hilb_n,\calO_Z\otimes L^k\otimes\Lambda^i\calB\bigr)\right)a^i.
\end{equation}



While we were finishing this preprint, M.~Hogankamp published a proof of the conjecture \cite{Ho17,M17}.
He used the construction of
the HOMFLYPT homology via Soergel bimodules and matched combinatorics of the complexes of bimodules that appear in knot homology of torus knots with the combinatorics of
the generalized Catalan numbers, the latter related to the sections of \(L^k\) by a combination of the results \cite{CM,M16,Hai}.

The paper \cite{Ho17} is  a real tour de force in combinatorics and homological algebra, however it does not provide a natural explanation for the appearance of \(\Hilb(\CC^2)\) in knot homology.
When the conjecture (\ref{eq:torushoms}) appeared, the  available constructions for
triply graded homology had no obvious connections with coherent sheaves on this variety.

A direct relation between the triply graded knot homology and $\CCqt$-equivariant coherent sheaves on \(\Hilb(\CC^2)\) was
 established  by the authors \cite{OR16} (see also the paper \cite{GorskyNegutRasmussen16} where a
 K-theoretic version of this relation is suggested).  Recently, it was also shown
 \cite{OblomkovRozansky20}
 by
 the authors that the link homology  from \cite{OR16} coincides with the Khovanov-Rozansky link homology \cite{KhR08a}.

From the papers \cite{OR16}, as well as \cite{GN15,GorskyNegutRasmussen16}, it is clear that the natural home for the algebro-geometric version of the HOMFLYPT homology
is the category of the quasi-coherent sheaves on the {\it nested} Hilbert scheme  \(\Hilb_{1,n}\) parameterizing chains of ideals \(I_1\supset I_2\supset\dots \supset I_n\)
with support of \(I_i/I_{i+1}\) being a point on the line \(y=0\). There is a natural analog \(Z_{1,n}\) of the punctual Hilbert scheme  \(Z\) in the nested case which consists of the
chains of ideals with the support of \(I_i/I_{i+1}\) at \((x,y)=(0,0)\). However,  the natural analogue of \(\calO_Z\) turns to be the Koszul complex of the defining
equations for \(Z_{1,n}\) which we denote by \([\calO_{Z_{1,n}}]^{vir}\) and define in section~\ref{sec:link}.
Finally, the weights of $\CCt$-action are combined with homological degree which means that all differentials have $\CCt$-weight one and the variable $y$ has homological degree two.

The main result of this paper is the following:

\begin{theorem} For any positive \(n,k\) we have
  \[\calP(T_{1+kn,n})=\sum_{i=0}^n\dim_{q,t}\left(\mathrm{H}^*\bigl(\Hilb_{1,n},[\calO_{Z_{1,n}}]^{vir}\otimes L^k\otimes\Lambda^i\calB\bigr)\right)a^i,
\]
where \(\calP\) is the Poincare polynomial for the triply graded homology \footnote{In this paper we use the term the triply graded homology for the homology theory
from \cite{OR16}, it is shown in \cite{OblomkovRozansky20} that the homology from \cite{OR16} are to the triply-graded homology of \cite{KhR08a}. }.
\end{theorem}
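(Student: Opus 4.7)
The proof proceeds by specializing the paper's correspondence between Coxeter links and coherent complexes on the flag Hilbert scheme to the torus knot case at hand. First, I would write the torus knot $T_{1+kn,n}$ as a Coxeter link: it is the closure of the $n$-strand braid $(\sigma_1 \sigma_2 \cdots \sigma_{n-1})^{1+kn}$, which factors as $\cox_n \cdot FT_n^k$, where $\cox_n = \sigma_1 \sigma_2 \cdots \sigma_{n-1}$ is the Coxeter element and $FT_n = \cox_n^n$ is the full twist on $n$ strands. This exhibits $T_{1+kn,n}$ as a Coxeter link with Coxeter factor $\cox_n$ and partial-twist factor equal to the $k$-th power of the (full) full twist, fitting it into the general framework of the paper.

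Second, I would apply the functor from the braid group $B_n$ to the $T_{sc}$-equivariant derived category of quasicoherent sheaves on $\Hilb_{1,n}$ constructed in \cite{OR16} to the braid $\cox_n \cdot FT_n^k$. Under this functor, the Markov closure operation yielding the triply graded HOMFLYPT homology becomes the assignment $F \mapsto \sum_i \dim_{q,t}\mathrm{H}^*(\Hilb_{1,n},\, F \otimes \Lambda^i \calB)\, a^i$, so computing $\calP(T_{1+kn,n})$ reduces to identifying the image of $\cox_n \cdot FT_n^k$ as a concrete complex on $\Hilb_{1,n}$. Since $FT_n$ is central in $B_n$, its image is compatible with any convolution and, by the standard Hilbert-scheme calculation already present in the \cite{OR16} dictionary, should be identified with the operator of tensor product by the tautological line bundle $L$; the $k$-th power then contributes the tensor factor $L^k$.

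Third, and here lies the main technical content of the proof, I would identify the image of the Coxeter element $\cox_n$ with the virtual structure sheaf $[\calO_{Z_{1,n}}]^{vir}$, i.e.\ the Koszul complex on the defining equations of the punctual nested locus $Z_{1,n} \subset \Hilb_{1,n}$. The strategy is to decompose $\cox_n = \sigma_1 \cdots \sigma_{n-1}$ into its $n-1$ simple factors, track the two-term Rouquier-type complex that the functor assigns to each $\sigma_i$, and convolve in the order dictated by the reduced expression. Each simple factor should contribute one Koszul term corresponding to one defining equation of $Z_{1,n}$ (morally, the vanishing of the $x$-coordinate of the $i$-th jump of the flag of ideals); the full convolution should reproduce, on the nose and not merely up to quasi-isomorphism, the Koszul complex cutting out $Z_{1,n}$, with precisely the $\CCqt$-grading shifts that define the superscript ``$vir$''. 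This step is the main obstacle: the combinatorics of the convolution must be controlled locally on $\Hilb_{1,n}$, and the quantum and homological gradings must be tracked carefully so that no extraneous line-bundle twist appears.

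Once these three identifications are in place, their compatibility with the tensor product lets one assemble the image of the full braid $\cox_n \cdot FT_n^k$ as $[\calO_{Z_{1,n}}]^{vir} \otimes L^k$, and tensoring against $\Lambda^i \calB$ before passing to derived global sections yields the stated formula.
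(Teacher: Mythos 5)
Your proposal follows the paper's route essentially verbatim: factor the braid as $\cox\cdot\delta^{\vec{k}}$ with all $k_i=k$ (equivalently $\cox\cdot FT^k$), peel off the $\calL^k$ tensor factor by the twist theorem of \cite{OR17}, and reduce the whole statement to identifying $\mathbb{S}_{\cox}$ with the Koszul complex $[\calO_{Z_{1,n}}]^{vir}$ by convolving the simple crossings, which is exactly the content of Theorem~\ref{thm:main} and the subsequent proposition in Section~\ref{sec:koszul-complex-link}. The one small correction is that the Koszul factors do not line up one-per-crossing as you suggest: the convolution with the unit (pushing forward along $B\subset G$) contributes the $\binom{n-1}{2}$ conditions $g_{ij}=0$ for $i-j>1$ that restrict to $[X,Y]_{ij}=0$, in addition to the $n-1$ factors $F_i$ that restrict to $x_{ii}=x_{i+1,i+1}$, and the full-twist-to-line-bundle identification is a theorem of \cite{OR17} rather than something internal to the \cite{OR16} dictionary.
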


This paper is a natural continuation of our  previous papers \cite{OR17,OR16}. In the second paper we prove the relation between the homology of
the closure \(L(\beta)\) of \(\beta\in Br_n\) and of closure of \(\beta\cdot \delta^{\vec{k}}\) where \(\delta^{\vec{k}}:=\prod_{i=1}^n \delta_i^{k_i}\) is the product of
the JM elements \[\delta_i:=\sigma_i\sigma_{i+1}\dots \sigma_{n-1}^2\dots\sigma_{i+1}\sigma_i,\quad i=1,\dots,n-1,\]
here \(\sigma_i\) are the standard generators for the braid group \(\Br_n.\)
The above mentioned formula  for the homology of \(T_{1+kn,n}\) is obtained by applying result of \cite{OR17} for \(\beta=\sigma_1\dots\sigma_{n-1}\) and
\(k_1 =\cdots = k_n = k\).
To apply the result of \cite{OR17} we need to analyze the sheaf-theoretic object that the theory from \cite{OR16} assigns to
the braid \(\beta\) which we call the Coxeter braid.

More generally, we study the sheaf-theoretic object that is attached by the theory from \cite{OR16} to the general quasi-Coxeter braid:
\[\cox_{S}:=\overrightarrow{\prod_{i\notin S}}\sigma_i,\]
where \(S\subset\{1,\dots,n-1\}\) is a subset and the product is taken in the descending order of the indices. In particular, we identify the
homology of the closure of element \(\cox_{S}\cdot\delta^k\) for any \(S\) and \(k\). We call these closures Coxeter links. This is a wide class of links
which includes the torus links \(T_{m,n}\), \((m,n)=1\).
The class also contains the torus link
\(T_{n,kn}\).

The Khovanov-Rozansky  homology of the links \(T_{n,nk}\) and knots \(T_{n,k}\) were studied in \cite{EH16} and in \cite{Ho17},\cite{M17} and would be interesting
to make a connection between our results and technique of these papers.

The nested Hilbert  scheme  \(\Hilb_{1,n}\) carries a natural line bundle \(\calL_i\) whose fiber over \(I_\bullet\) is the quotient \(I_i/I_{i+1}\).
For any subset \(S\subset\{1,\dots,n-1\}\) we define \(Z^S_{1,n}\subset\Hilb_{1,n}\) to be a subscheme defined by the condition \(\supp(I_{i-1}/I_{i})=
\supp(I_{i}/I_{i+1}))\) for all \(i\notin S\). We prove

\begin{theorem}\label{thm:coxlinks} For any \(S\subset\{1,\dots,n-1\}\) and \(\vec{k}\in \ZZ^{n-1}\)
  \[\calP\bigl(L(\cox_S\cdot \delta^{\vec{k}})\bigr)=\sum_{i=0}^n\dim_{q,t}\left(\mathrm{H}^*\bigl(\Hilb_{1,n}, [\calO_{Z_{1,n}^S}]^{vir}\otimes\calL^{\vec{k}}\otimes\Lambda^i\calB\bigr)\right)a^i.
\]
\end{theorem}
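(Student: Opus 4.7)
The strategy follows the framework developed in the authors' previous papers \cite{OR16, OR17}. By \cite{OR16}, there is a functor $\Phi$ from the braid group $\Br_n$ into a suitable category of (matrix-factorization-style) sheaves on an ambient space presenting $\Hilb_{1,n}$, such that the triply graded homology of the closure $L(\beta)$ is recovered as the cohomology of $\Phi(\beta)$ tensored against the exterior powers $\Lambda^i \calB$. By \cite{OR17}, right multiplication by the Jucys-Murphy element $\delta_i^{k_i}$ corresponds on the sheaf side to tensoring with $\calL_i^{k_i}$; applied to $\cox_S \cdot \delta^{\vec{k}}$, this reduces the theorem to identifying
\[ \Phi(\cox_S) \;\cong\; [\calO_{Z^S_{1,n}}]^{vir}. \]

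The plan is to establish this identification by local computation on $\Hilb_{1,n}$. First, I would fix an affine chart on the nested Hilbert scheme and write down the explicit defining equations for $Z^S_{1,n}$: the support condition $\supp(I_{i-1}/I_i) = \supp(I_i/I_{i+1})$ for each $i \notin S$ translates into two polynomial equations (equality of $x$- and $y$-coordinates of the relevant points), and $[\calO_{Z^S_{1,n}}]^{vir}$ is by definition the Koszul complex on the resulting regular sequence. Second, I would invoke the per-generator computation from \cite{OR16}: the sheaf $\Phi(\sigma_i)$ is itself a two-term Koszul factor cutting out exactly the locus where the $i$-th and $(i{+}1)$-st points coincide. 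Third, I would compose these Koszul factors in the descending order prescribed by $\cox_S = \overrightarrow{\prod_{i \notin S}} \sigma_i$ and recognize the total complex as the Koszul resolution of $Z^S_{1,n}$.

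The main obstacle is in the third step. The individual generators $\sigma_i$ and $\sigma_{i-1}$ share the index $i-1$, so their associated Koszul equations are not a priori transverse, and the naive composition of matrix factorizations can have spurious summands reflecting the braid relations. I expect the bulk of the technical work to lie in showing that in the Coxeter configuration (where each index $i \notin S$ appears exactly once in the word and in strictly descending order), this potential redundancy collapses: the defining equations for the loci cut out by successive $\sigma_i$'s form a regular sequence on the relevant chart, so the composition of the individual Koszul factors is quasi-isomorphic to a single Koszul complex without higher Tor. A careful local analysis, combined with the equivariance under the $\CCqt$-action and a descending induction on the size of $\{1,\dots,n-1\} \setminus S$, should reduce this to an elementary verification in the case $|S^c| = 1$.

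Once the identification $\Phi(\cox_S) \cong [\calO_{Z^S_{1,n}}]^{vir}$ is established, the theorem follows by combining with the twisting formula of \cite{OR17} and extracting the $a$-graded decomposition from the tensor product with $\Lambda^\bullet \calB$, yielding the stated expression for $\calP(L(\cox_S \cdot \delta^{\vec{k}}))$. The torus knot and torus link specializations then follow by taking $S = \emptyset$ and $k_i = k$, respectively by observing that $T_{n,nk}$ corresponds to the Coxeter element with $S = \{1,\dots,n-1\}$ twisted by an appropriate power of the full twist.
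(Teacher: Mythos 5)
Your high-level structure is correct and matches the paper: reduce the theorem to the identification $\mathbb{S}_{\cox_S}\cong[\calO_{Z^S_{1,n}}]^{vir}$ and then invoke the twisting result of \cite{OR17} (Theorem~1.1.1 there) to handle the $\delta^{\vec{k}}$ factor, i.e.\ $\mathbb{S}_{\beta\cdot\delta^{\vec{k}}}=\mathbb{S}_\beta\otimes\calL^{\vec{k}}$. That much is exactly how the paper concludes in Section~\ref{sec:proof-theor}.

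The gap is in the middle step. You propose to fix an affine chart on $\Hilb_{1,n}$, view each $\Phi(\sigma_i)$ as a Koszul factor on that chart, and then argue the successive factors compose to a single Koszul complex because the defining equations form a regular sequence so that there is ``no higher Tor.'' This does not reflect where the composition actually happens in the \cite{OR16} framework. The braid generators $\bclC_{\pm}^{(i)}$ are not sheaves on $\Hilb_{1,n}$; they are $B^2$-equivariant matrix factorizations on $\calXr_2=\frb\times G\times\frn$ with potential $\Wr=\Tr(X\Ad_g(Y))$, and the product of braids corresponds to the convolution $\bar\star$, which pulls back along $\bar\pi_{12},\bar\pi_{23}$ to $\calXr_3$, takes the Chevalley--Eilenberg derived invariants of the middle $\frn^{(2)}$, and pushes forward along $\bar\pi_{13}$. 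This is emphatically not a derived tensor product of structure sheaves on $\Hilb_{1,n}$, and transversality/regularity of the equations cutting out $Z^S_{1,n}$ in a chart would not by itself show that $\bar\star$ of the generators collapses to the claimed Koszul complex. The actual content of the paper's proof is Theorem~\ref{thm:main} and its Corollary~3.0.4: an inductive computation on $\calXr_2$ (using insertion functors $\overline{\Ind}_{k,k+1}$) that tracks the equivariant correction differentials $\partial$ through the Chevalley--Eilenberg complex for $\frn_2$, computes the off-diagonal entry $k$ of the resulting Koszul matrix, and reduces it modulo $I_g+(F_1,\dots,F_{n-2})$ to $F_{n-1}/\Delta_c$. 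Only after $\bclC_{\cox_S}$ is identified with $\mathrm{K}^{\Wr}(\{g_{ij}\}_{ij\in S'},\{F_i\}_{i\notin S})$ does the paper pull back along $j_e^*$ (essentially restrict to $g=1$) to obtain $\mathbb{S}_{\cox_S}=[\calO_{Z^S_{1,n}}]^{vir}$ in Proposition~4.2.1. Your proposed route, conducted entirely on charts of $\Hilb_{1,n}$, has no access to the $g$-directions and the $B^2$-equivariant correction data where the convolution actually lives, so the descending induction on $|S^c|$ you envision cannot get started.

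One smaller inaccuracy: the support condition $\supp(I_{i-1}/I_i)=\supp(I_i/I_{i+1})$ contributes a \emph{single} equation $x_{ii}-x_{i+1,i+1}$ for each $i\notin S$, not two, because all the points are constrained to the line $y=0$; the remaining rows of the Koszul complex defining $[\calO_{Z^S_{1,n}}]^{vir}$ are commutator equations $[X,Y]_{ij}$ for $(ij)\in\tilde S$, and these come from restricting the functions $g_{ij}$ in the matrix factorization to $g=1$ (via $\partial\Wr/\partial g_{ij}|_{g=1}$), not from any coincidence-of-points condition.
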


We prove this theorem in section~\ref{sec:coxet-matr-fact} and section~\ref{sec:link}. We also provide some short overview of the methods of \cite{OR16} in the section~\ref{sec:matr-fact}.
If the vector \(\vec{k}\) is sufficiently positive, we can use Atiyah-Bott localization \cite{AB} to compute the graded dimensions in this formula similar to the
one from \cite{GorskyNegutRasmussen16}, see theorem~\ref{thm:localization} below.

It turns out that the localization approach only works under some vanishing conditions on the sheaf  homology like in \cite{Hai}. 
In the section~\ref{sec:local-expl-form} we show that the easiest version of the vanishing condition \cite{OblomkovRozansky18a} implies

\begin{theorem}\label{thm:localization}
  For  \(\vec{k}\in \ZZ_{>0}\) such that \(k_1>k_2>\dots>k_{n-1}\) there is \(M\) such that
  we have the following explicit formula for the knot invariant:
  \[\calP\bigl(L( 1\cdot\delta^{\vec{b}})\bigr)=\sum_{p\in \Hilb_{1,n}^{T}} \Omega_p(Q,T,a)\, Q^{\vec{b}\cdot w_x(p)}\, T^{\vec{b}\cdot w_y(p)},\]
  where \(Q=q^2/t^2,T=t^2\), \(\vec{b}=\vec{k}+r\vec{1}\), \(\vec{1}=(1,\dots,1)\), \(r>M\) and the weight \(\Omega\) and vectors \(w_\bullet(p)\) can be explicitly computed and depends only on \(p\).
\end{theorem}

The formulas for \(\Omega_p(Q,T,a)\) and \(w_\bullet(p)\) are given in section~\ref{sec:local-expl-form} and theorem is proved at the end of section~\ref{sec:fourth-grad-local}.
We conjecture that the theorem can be strengthened in two direction: we can replace the Coxeter braid \(1=\cox_{\{1,\dots,n-1\}}\) by any Coxeter braid \(\cox_S\) and
we give a precise criterion for the vector \(\vec{k}\)  to be sufficiently positive.
We formulate these conjectures in section \ref{sec:conjectures} and provide evidence in their support. Note that the weight \(\Omega_p\) appears to be equal to the localization weight from the main formula of
\cite{GN15}.

We do not expect a simple localization formula for the Poincare polynomials of non-positive links. The examples of such links
are discussed
in the section~\ref{sec:expl-comp} and section~\ref{sec:local-expl-form}.

The first author was lucky to have an advice of Jim Hamphreys on multiple subjects of mathematics. In particular, quasi-Coxeter braids were suggested by Jim as a class of braids that tends to lead to more computable objects. This suggestion was a starting  point of this paper.

{\bf Acknowledgments} We would like to thank Dmitry Arinkin, Eugene Gorsky, Roman Bezrukavnikov, Andrei Negu{\c t} for useful discussions. Also we would like thank Eugene Gorsky and Andrei
Negu{\c t} for  a careful reading of the first version of this paper and suggestion that helped to improve the text.
The work of A.O. was supported in part by  the NSF CAREER grant DMS-1352398 and NSF-FRG grant DMS-1760373.
The work of L.R. was supported in part by
the NSF grant DMS-1760578.

\section{Matrix factorizations and knot invariants}
\label{sec:matr-fact}
The construction of link invariants in~\cite{OR16} is based on a homomorphism from the braid group to a special monoidal category of matrix factorizations. The main result of this paper follows from the explicit computation of the images of Coxeter braids.
%

\subsection{Matrix Factorizations}
\label{ssec:matr-fact}
Matrix factorizations were introduced by  Eisenbud \cite{E80} and further developed by Orlov \cite{O04}, see \cite{D11} for a review. Here we present only the basic definitions and omit proofs.

The category of matrix factorizations $\MFs(Z,F)$ is a triangulated category based on an affine variety $Z$ and 
a function $F\in \CC[Z]$.
An object of this category is a $\ZZ_2$-graded free $\CC[Z]$-module $M=M_0\oplus M_1$ of finite rank equipped with
a degree one endomorphism $D$ called a curved differential:
\[ \mathcal{F}=(M_0\oplus M_1 ,D),\quad D: M_i\rightarrow M_{i+1},\quad D^2=F.\]

Given \(\mathcal{F}=(M,D)\) and \(\mathcal{G}=(N,D')\) the linear space of morphisms \(\Hom(\mathcal{F},\mathcal{G})\) consists of the homomorphisms of
\(\CC[Z]\)-modules
\(\phi=\phi_0\oplus \phi_1\), \(\phi_i\in\Hom(M_i,N_i) \) such that \(\phi\circ D=D'\circ \phi\).
Two morphisms \(\phi,\rho\in \Hom(\mathcal{F},\mathcal{G})\) are homotopic if there is homomorphism of \(\CC[Z]\)-modules \(h=h_0\oplus h_1 \),
\(h_i\in \Hom(M_i,N_{i+1})\) such that \(\phi-\rho=D'\circ h-h\circ D\).

In the paper \cite{OR16} we introduced a notion of the equivariant matrix factorizations which we explain below.
First, recall the construction of the Chevalley-Eilenberg complex.

\subsection{Chevalley-Eilenberg complex}
\label{sec:chev-eilenb-compl}

Suppose that $\frh$ is a Lie algebra. Chevalley-Eilenberg complex
 $\CE_\frh$ is the complex $(V_\bullet(\frh),d)$ with $V_p(\frh)=U(\frh)\otimes_\CC\Lambda^p \frh$ and differential $d_{ce}=d_1+d_2$ where:
 \def\dtheta{d}
 $$ d_1(u\otimes x_1\wedge\dots \wedge x_p)=\sum_{i=1}^p (-1)^{i+1} ux_i\otimes x_1\wedge\dots \wedge \hat{x}_i\wedge\dots\wedge x_p,$$
 $$ d_2(u\otimes x_1\wedge\dots \wedge x_p)=\sum_{i<j} (-1)^{i+j} u\otimes [x_i,x_j]\wedge x_1\wedge\dots \wedge \hat{x}_i\wedge\dots\wedge \hat{x}_j\wedge\dots \wedge x_p,$$

 Let us define by $\Delta$ the standard map $\frh\to \frh\otimes \frh$ defined by $x\mapsto x\otimes 1+1\otimes x$.
 Suppose $V$ and $W$ are modules over Lie algebra $\frh$ then we use notation
 $V\odel W$ for $\frh$-module which is isomorphic to $V\otimes W$ as vector space, the $\frh$-module structure being defined by  $\Delta$. Respectively, for a given $\frh$-equivariant matrix factorization $\calF=(M,D)$ we denote by $\CE_{\frh}\odel \calF$
 the $\frh$-equivariant matrix factorization $(CE_\frh\odel\calF, D+d_{ce})$. The $\frh$-equivariant structure on $\CE_{\frh}\odel \calF$ originates from the
 left action of $U(\frh)$ that commutes with right action on $U(\frh)$ used in the construction of $\CE_\frh$.

 A slight modification of the standard fact that $\CE_\frh$ is the resolution of the trivial module implies that \(\CE_\frh\stackon{$\otimes$}{$\scriptstyle\Delta$} M\) is a free resolution of the
$\frh$-module $M$.

\subsection{Equivariant matrix factorizations}
\label{sec:equiv-matr-fact}

Let us assume that there is an action of the Lie algebra \(\frh\) on \(\calZ\) and \(F\) is a \(\frh\)-invariant function.
Then we can construct the following triangulated category \(\MFs_{\frh}(\calZ,W)\).

The objects of the category are the triples:
\[\mathcal{F}=(M,D,\partial),\quad (M,D)\in\MFs(\calZ,W) \]
where $M=M^0\oplus M^1$ and $M^i=\CC[\calZ]\otimes V^i$, $V^i \in \Mod_{H}$,
$\partial\in \oplus_{i>j} \Hom_{\CC[\calZ]}(\Lambda^i\frh\otimes M, \Lambda^j\frh\otimes M)$ and $D$ is an odd endomorphism
$D\in \Hom_{\CC[\calZ]}(M,M)$ such that
$$D^2=F,\quad  D_{tot}^2=F,\quad D_{tot}=D+d_{ce}+\partial,$$
where the total differential $D_{tot}$ is an endomorphism of $\CE_\frh\odel M$, that commutes with the $U(\frh)$-action.
The morphism \(\partial\) is called {\it correction differential}.


Note that we do not impose the equivariance condition on the differential $D$ in our definition of matrix factorizations. On the other hand, if $\calF=(M,D)\in \MFs(\calZ,F)$ is a matrix factorization with
$D$ that commutes with $\frh$-action on $M$ then $(M,D,0)\in \MFs_\frh(\calZ,F)$.
We call such matrix factorization {\it strictly equivariant}.


Given two $\frh$-equivariant matrix factorizations $\calF=(M,D,\partial)$ and $\tilde{\calF}=(\tilde{M},\tilde{D},\tilde{\partial})$ the space of morphisms $\Hom(\calF,\tilde{\calF})$ consists of
homotopy equivalence classes of elements $\Psi\in \Hom_{\CC[\calZ]^\frh}(\Lambda^\bullet \frh\otimes M, \Lambda^\bullet\frh\otimes \tilde{M})$ such that $\Psi\circ D_{tot}=\tilde{D}_{tot}\circ \Psi$ and $\Psi$ commutes with
$U(\frh)$-action on $\CE_\frh\odel M$. Two map $\Psi,\Psi'\in \Hom(\calF,\tilde{\calF})$ are homotopy equivalent if
there is \[ h\in  \Hom_{\CC[\calZ]}(\CE_\frh\odel M,\CE\frh\odel\tilde{M})\] such that $\Psi-\Psi'=\tilde{D}_{tot}\circ h+ h\circ D_{tot}$ and $h$ commutes with $U(h)$-action on  $\CE_\frh\odel M$.

 Given two $\frh$-equivariant matrix factorizations $\calF=(M,D,\partial)\in \MFs_\frh(\calZ,F)$ and $\tilde{\calF}=(\tilde{M},\tilde{D},\tilde{\partial})\in \MFs_\frh(\calZ,\tilde{F})$
 $\calF\otimes\tilde{\calF}\in \MFs_\frh(\calZ,F+\tilde{F})$ as the equivariant matrix factorization $(M\otimes \tilde{M},D+\tilde{D},\partial+\tilde{\partial})$.

 \subsection{Push forwards, quotient by the group action}
\label{sec:push-forwards}

The technical part of \cite{OR16} is the construction of push-forwards of equivariant matrix factorizations. Here we state the main
results, the details may be found in section 3 of \cite{OR16}. We need push forwards along projections and embeddings. We also use  the
functor of taking quotient by group action for our definition of the convolution algebra.

The projection case is more elementary. Suppose \(\calZ=\mathcal{X}\times\mathcal{Y}\), both \(\calZ \) and \(\mathcal{X}\) have \(\frh\)-action and
the projection \(\pi:\mathcal{Z}\rightarrow\mathcal{X}\) is \(\frh\)-equivariant. Then
for any $\frh$ invariant element $w\in\CC[\calX]^\frh$ there is a functor
\(\pi_{*}\colon \MFs_{\frh}(\calZ, \pi^*(w))\rightarrow \MFs_{\frh}(\mathcal{X},w)
\)
which simply forgets the action of $\CC[\calY]$.


We define an embedding-related push-forward in the case when the subvariety $\calZ_0\xhookrightarrow{j}\calZ$
is the common zero of an ideal $I=(f_1,\dots,f_n)$ such that the functions $f_i\in\CC[\calZ]$ form a regular sequence. We assume that the Lie algebra $\frh$ acts on $\calZ$ and $I$ is $\frh$-invariant. Then there exists an $\frh$-equivariant Koszul complex $K(I)=(\Lambda^\bullet \CC^n\otimes \CC[\calZ],d_K)$ over $\CC[\calZ]$ which has non-trivial homology only in degree zero. Then in section~3 of \cite{OR16} we define the push-forward functor
\[
j_*\colon \MFs_{\frh}(\calZ_0,W|_{\calZ_0})\longrightarrow
\MFs_{\frh}(\calZ,W),
\]
for any $\frh$-invariant element $W\in\CC[\calZ]^\frh$.


Finally, let us discuss the quotient map. The complex \(\CE_\frh\) is a resolution of the trivial \(\frh\)-module by free modules. Thus the correct derived
version of taking \(\frh\)-invariant part of the matrix factorization \(\mathcal{F}=(M,D,\partial)\in\MFs_\frh(\calZ,W)\), \(W\in\CC[\calZ]^\frh\) is
\[\CE_\frh(\mathcal{F}):=(\CE_\frh(M),D+d_{ce}+\partial)\in\MFs(\calZ/H,W),\]
where \(\calZ/H:=\mathrm{Spec}(\CC[\calZ]^\frh )\) and use the general definition of \(\frh\)-module \(V\):
\[\CE_\frh(V):=\Hom_\frh(\CE_\frh,\CE_\frh\odel V).\]

\subsection{Convolutions on  reduced spaces}
\label{sec:conv-new}
For a Borel group $B$, we treat $B$-modules as \(T\)-equivariant \(\frn=\mathrm{Lie}([B,B])\)-modules. For a space $\calZ$ with $B$-action and for $W\in\CC[\calZ]^B$ we define $\MFs_B(\calZ,W)$ as a full subcategory of $\MFs_{\frn}(\calZ,W)$ whose objects are matrix factorizations \((M, D, \partial)\), where $M$ is a \(B\)-module and the differentials $D$ and $\partial$ are \(T\)-invariant. The category
\(\MFs_{B^\ell}(\calZ,W)\) has a similar definition.


The backbone  of the constructions of the knot invariant from \cite{OR17} is the study of the category of matrix factorizations on the spaces \(\calXr_\ell\):
\(\calXr_\ell:=\frb\times G^{\ell-1}\times\frn\) with the \(B^\ell\)-action:
\[(b_1,\dots,b_\ell)\cdot(X,g_1,\dots,g_{\ell-1},Y)=(\Ad_{b_1}(X),b_1g_1b_2^{-1},b_2g_2b_3^{-1},\dots,\Ad_{b_\ell}(Y)).\]

The categories that we use in \cite{OR16} are subcategories \(\MF_{B^\ell}(\calXr_\ell,F)\subset \MFs_{B^\ell}(\calXr_\ell,F)\) that consist of the matrix factorizations which
are equivariant with respect to the action of \(T_{sc}\) and \(G\)-invariant. In particular the space \(\calXr_2\) has the following \(B^2\)-invariant potential:
\[\Wr(X,g,Y)=\Tr(X\Ad_g(Y)),\]
and the category \(\MF_{B^\ell}(\calXr_\ell,\Wr)\) has a structure of the convolution algebra \cite{OR16} that we outline below.

There are the following
maps $\bar{\pi}_{ij}:\calXr_3\to\calXr_2$:
\[\bar{\pi}_{12}(X,g_{12},g_{13},Y)=(X,g_{12},\Ad_{g_{23}}(Y)_{++}),
  \quad\bar{\pi}_{13}(X,g_{12},g_{13},Y)=(X,g_{12}g_{23},Y),\]
 \[\bar{\pi}_{23}(X,g_{12},g_{13},Y)=(\Ad_{g_{12}}^{-1}(X)_+,g_{23},Y).\]
Here and everywhere below \(X_+\) and \(X_{++}\) stand for the upper and strictly-upper triangular parts of \(X\).
The maps \(\bar{\pi}_{12}\times\bar{\pi}_{23}\) is  \(B^2\)-equivariant  but not \(B^3\)-equivariant. However in section 5.4 of \cite{OR16} we show that
for any \(\mathcal{F},\mathcal{G}\in\MF_{B^2}(\calXr,\Wr)\) there is a natural element
\begin{equation}\label{eq:conv-red}
(\bar{\pi}_{12}\otimes_B\bar{\pi}_{23})^*(\mathcal{F}\boxtimes\mathcal{G})\in\MF_{B^3}(\calXr_3,\bar{\pi}_{13}^*(W)),
\end{equation}

such that  we can define the binary operation on \(\MF_{B^2}(\calXr,\Wr)\):
\[\mathcal{F}\bar{\star}\mathcal{G}:=\bar{\pi}_{13*}(\CE_{\frn^{(2)}}((\bar{\pi}_{12}\otimes_B\bar{\pi}_{23})^*(\mathcal{F}\boxtimes\mathcal{G}))^{T^{(2)}}).\]

Instead of going into details of the construction of the convolution algebra let us explain the induction functors \cite{OR16} that provide us with an effective
method of computing of the convolution product.

\subsection{Induction functors}
\label{sec:induction-functors}
The standard parabolic subgroup \(P_k\) has Lie algebra generated by \(\frb\) and \(E_{i+1,i}\), \(i\ne k\).
Let us define space  \(\calXr_2(P_k):=\frb\times P_k \times \frn\) and let us also use notation
\(\calXr_2(G_n)\) for \(\calXr_2(G_n)\). There is a natural embedding \(\bar{i}_k:\calXr_2(P_k)\rightarrow\calXr_2\) and
a natural projection \(\bar{p}_k:\calXr_2(P_k)\rightarrow\calXr_2(G_k)\times\calXr_2(G_{n-k})\). The embedding \(\bar{i}_k\) satisfies
the conditions for existence of the push-forward and we can define the induction functor:
\[\overline{\ind}_k:=\bar{i}_{k*}\circ \bar{p}_k^*: \MF_{B_k^2}(\calXr_2(G_k),\Wr)\times\MF_{B_{n-k}^2}(\calXr_2(G_{n-k}),\Wr)\rightarrow\MF_{B_n^2}(\calXr_2(G_{n}),\Wr)\]

Similarly we define space  \(\calXr_{2,fr}(P_k)\subset\frb\times P_k \times \frn\times V\) as an open subset defined by the stability condition:
\begin{equation}
  \CC\langle X,\Ad_g^{-1}(Y)\rangle u=V, \quad g^{-1}(u)\in V^0.
  \end{equation}

The last space has a natural projection map \(\bar{p}_k:\calXr_{2,fr}(P_k)\rightarrow\calXr_2(G_k)\times\calXr_{2,fr}(G_{n-k})\) and
the embedding \(\bar{i}_k: \calXr_{2,fr}(P_k)\rightarrow\calXr_{2,fr}(G_{n})\) and we can define the induction functor:
\[\overline{\ind}_k:=\bar{i}_{k*}\circ \bar{p}_k^*: \MF_{B_k^2}(\calXr_2(G_k),\Wr)\times\MF_{B_{n-k}^2}(\calXr_{2,fr}(G_{n-k}),\Wr)\rightarrow\MF_{B_n^2}(\calXr_{2,fr}(G_{n}),\Wr)\]

It is shown in section 6 (proposition 6.2) of \cite{OR16} that the functor \(\overline{\ind_k}\) is the homomorphism of the convolution algebras:
\[\overline{\ind}_k(\mathcal{F}_1\boxtimes\mathcal{F}_2)\bar{\star} \overline{\ind}_k(\mathcal{G}_1\boxtimes\mathcal{G}_2)=
  \overline{\ind}_k(\mathcal{F}_1\bar{\star}\mathcal{G}_2\boxtimes\mathcal{F}_2\bar{\star}\mathcal{G}_2).\]

Let us define \(B^2\)-equivariant embedding \(i: \calXr_2(B_n)\rightarrow\calXr_2\), \(\calXr_2(B):=\frb\times B\times \frn\).
The pull-back of \(\Wr\) along the map \(i\) vanishes and the embedding \(i\) satisfies the conditions for existence of the push-forward
\(i_*:\MF_{B^2}(\calXr_2(B_n),0)\rightarrow \MF_{B^2}(\calXr_2(G_n),\Wr)\). We denote by \(\underline{\CC[\calXr_2(B_n)]}\in\MF_{B^2}(\calXr_2(B_n),0)\) the
matrix factorization with zero differential that is homological only in even homological degree. As it is shown in proposition 7.1 of \cite{OR16} the
push-forward
\[\bar{\mathds{1}}_n:=i_{*}(\underline{\CC[\calXr_2(B_n)]})\]
is the unit in the convolution algebra. Similarly, \(\mathds{1}_n:=\Phi(\bar{\mathds{1}}_n)\) is also a unit in non-reduced case.

Using the induction functor and the unit in the convolution algebra
we define  the insertion functor that inserts matrix factorization of smaller rank inside the higher rank one:
\[\overline{\Ind}_{k,k+1}:\MF_{B_2^2}(\calXr_2(G_2),\Wr)\rightarrow\MF_{B_n^2}(\calXr_2(G_n),\Wr)\]
\[\overline{\Ind}_{k,k+1}(\calF):=\overline{\ind}_{k+1}(\overline{\ind}_{k-1}(\bar{\mathds{1}}_{k-1}\times \calF)\times\bar{\mathds{1}}_{n-k-1}),\]

\subsection{Generators of the braid group}
\label{sec:gener-braid-group}

Let us first discuss the case of the braids on two strands. The key to construction of the braid group action in \cite{OR16} is the following factorization in the case
\(n=2\):
$$\Wr(X,g,Y)=y_{12}(2g_{11}x_{11}+g_{21}x_{12})g_{21}/\det,$$
where \(\det=\det(g)\) and
$$ g=\begin{bmatrix} g_{11}&g_{12}\\ g_{21}& g_{22}\end{bmatrix},\quad X=\begin{bmatrix} x_{11}&x_{12}\\ 0& x_{22}\end{bmatrix},\quad Y=\begin{bmatrix} 0& y_{12}\\0&0\end{bmatrix}$$
Thus we can define the following strongly equivariant Koszul matrix factorization:
\[\bar{\mathcal{C}}_+:=(\CC[\calXr_2]\otimes \Lambda\langle\theta\rangle,D,0,0)\in\MF_{B^2}(\calXr_2,\Wr),\]
\[  \quad D=\frac{g_{12}y_{12}}{\det}\theta+g_{11}(x_{11}-x_{22})+g_{21}x_{12}\frac{\partial}{\partial\theta},\]
where \(\Lambda\langle\theta\rangle\)   is the exterior algebra with one generator.

This matrix factorization corresponds to the positive elementary braid on two strands.

Using the insertion  functor we can extend the previous definition on the case of the arbitrary number of strands:
\[\bar{\calC}_+^{(k)}:=\overline{\Ind}_{k,k+1}(\bar{\calC}_+).\]

The section 11 of \cite{OR16} is devoted to the proof of the braid relations between these elements:
\[\bar{\calC}^{(k+1)}_+\bar{\star}\bar{\calC}^{(k)}_+\bar{\star}\bar{\calC}^{(k+1)}_+=\bar{\calC}^{(k)}_+\bar{\star}\bar{\calC}_+^{(k+1)}
  \bar{\star}\bar{\calC}_+^{(k)},\]

Let us now discuss the inversion of the elementary braid. In view of inductive definition of the braid group action, it is sufficient
to understand the inversion in the case \(n=2\).

Thus we define:
\[\bar{\calC}_-:=\bar{\calC}_+\langle-\chi_1,\chi_2\rangle\in\MF_{B^2}(\calXr_2(G_2),\Wr),\]
respectively we define \(\bar{\calC}_-^{(k)}:=\overline{\Ind}_{k,k+1}(\bar{\calC}_-)\).
It is shown in the section 9 of \cite{OR16} that \(\bar{\calC}^{(k)}_-\) is inverse to \(\bar{\calC}^{(k)}_+\).

\subsection{Koszul Matrix Factorizations}
\label{sec:kosz-matr-fact}
The generators of the braid group from the previous subsection are examples of the Koszul matrix factorizations/
Let us remind a  general definition of Koszul matrix factorizations and elementary transformation of the Koszul matrix factorizations. More details on Koszul matrix factorizations in
the
form relevant to the current paper could be found in \cite{OR16}.

Suppose $\calZ$ is a variety with the action of a group $G$ and $F$ is a $G$-invariant potential.
An object of the category $\MFs_{B^2}^{str}(\calZ,F)$ is a free $B^2$-equivariant $\Zt$-graded $\CC[\calZ]$-module $\xxM$ with the odd $G$-invariant differential $\xD$ such that $\xD^2 = F\xIdv{M}$. In particular, a free $G$-equivariant $\CC[\calZ]$-module $\xxV$ with two elements $\xdl\in\xxV$, $\xdr\in\xxV^*$ such that $(v,w)=F$, determines a Koszul matrix factorization $\Kszvvv{\xxV}{\xdl}{\xdr} = \bigwedge^{\bullet}\xxV$ with the differential $Dv = \xdl\wedge v + \xdr\cdot v$ for $v\in \bigwedge^{\bullet}\xxV$. We use a more detailed notation by choosing a basis $\ktht_1,\ldots,\ktht_n\in \xxV$ and presenting $\xdl$ and $\xdr$ in terms of components: $\xdl=a_1\ktht_1 +\cdots a_n\ktht_n$, $\xdr=b_1\ktht^*_1+\cdots + b_n\ktht^*_n$:
\begin{equation}
\label{eq:kszm}
\Kszvvv{\xxV}{\xdl}{\xdr} = \kmtr{a_1 & b_1 &\ktht_1\\ \vdots & \vdots & \vdots \\
a_n & b_n & \ktht_n}
\end{equation}
The structure of $G$-module is described by specifying the action of $G$ on the basis $\ktht_1,\ldots,\ktht_n$.
In some cases when $G$-equivariant structure of the module $\xxM$ is clear from the context we omit the last columns from the notations. We call a matrix presenting Koszul matrix factorization Koszul matrix.
For example, %
%
%
if  we change the basis $\theta_1,\dots, \theta_n$ to the basis $\theta_1,\dots,\theta_i+c\theta_j,\dots, \theta_j,\dots,\theta_n$ the $i$-th and
$j$-th rows of the Koszul matrix will change:
$$ \begin{bmatrix}
a_i& b_i&\theta_i\\
a_j& b_j&\theta_j
\end{bmatrix}
\mapsto
\begin{bmatrix}
a_i+ca_j& b_i&\theta_i+c\theta_j\\
a_j& b_j-cb_i&\theta_j
\end{bmatrix}
$$

Suppose $a_1,\dots,a_n\in \CC[\calZ]$ is a regular sequence and $F\in (a_1,\dots,a_n)$. We can choose $b_i$ such that $F=\sum_i a_i b_i$ and
$\xdl$ and $\xdr$ are as above: $\xdl=a_1\ktht_1 +\cdots a_n\ktht_n$, $\xdr=b_1\ktht^*_1+\cdots + b_n\ktht^*_n$.
In general, there is no unique choice for $b_i$ but all choices lead to homotopy equivalent
Koszul matrix factorizations (in the non-equivariant case they would be simply isomorphic).
In other words, if $b'_i$ is a another collection of elements such that $F=\sum a_i b'_i$  and $\xdr'=b'_1\ktht^*_1+\cdots + b'_n\ktht^*_n$
then Lemma 2.2 from \cite{OR16} imply that the complexes $\Kszvvv{\xxV}{\xdl}{\xdr}$ and $\Kszvvv{\xxV}{\xdl}{\xdr'}$ are homotopy equivalent.
Thus from now on we use notation $\mathrm{K}^F(a_1,\dots,a_n)$ for such matrix factorization.

\section{Coxeter matrix factorization}
\label{sec:coxet-matr-fact}
In the previous section we outlined the definition of the convolution algebra on the category of matrix factorizations. In particular we
explained that for any element \(\beta\in\Br_n\) we can associate a  matrix factorization
\[\bar{\calC}_\beta:=\bclC_{\ep_1}^{(k_1)}\bst\dots\bst\bclC_{\ep_l}^{(k_l)},\]
where \(\beta=\sigma_{k_1}^{\ep_1}\cdot\dots\cdot\sigma_{k_l}^{\ep_l}\) is an expression for \(\beta\) is terms of elementary braids.

We could not expect a simple formula for \(\bclC_\beta \) for a general element \(\beta\in\Br_n\). In particular, as one can see from
computations in the section 11 of \cite{OR16} the matrix factorizations \(\bclC_\beta\) is not always Koszul. Thus it is a bit surprising,
at least for us, that for the Coxeter braid matrix factorization \(\bclC_\beta\) is Koszul and quite simple. To describe the answer we need coordinates on
the space \(\calXr_n=\frb_n\times G_n\times \frn_n\):
\[X=(x_{ij})_{i\leq j},\quad g=(g_{ij}),\quad Y=(y_{ij})_{i\leq j}.\]

Let us introduce \(i\times i\) matrix \(M_i:=[g_{\bullet,1}^{(i)},\dots,g_{\bullet,i-1}^{(i)},\hat{X}^{(i)}_{\bullet,i}]\) where
\(v^{(i)}\) is an abbreviation for vector consisting of first \(i\) entries of \(v\in \CC^n\) and \(\hat{X}=X-x_{11}\Id_n\).
Respectively, we define functions \(F_i:=\det(M_{i+1})\in \CC[\frb\times G]\) and \(I_F\subset \CC[\calXr_2]\) is the ideal generated
by these functions \(F_i\), \(i=1,\dots,n-1\).

\begin{proposition} Let \(I_g=(\{g_{ij}\}_{i-j>1})\) be an ideal in \(\CC[\calXr_2]\), then the ideal \(I_{cox}:=I_g+I_F\) contains \(\Wr\)
\end{proposition}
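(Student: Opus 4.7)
The plan is to show the stronger statement that modulo $I_{cox}$ the conjugate $g^{-1}\hat{X}g$ (where $\hat{X}:=X-x_{11}\Id$) is upper triangular. Since $\Tr(Y)=0$ for $Y\in\frn$ and by cyclicity of the trace,
\[
\Wr = \Tr(X\Ad_g(Y)) = \Tr(g^{-1}\hat{X}g\cdot Y) = \sum_{i<j}(g^{-1}\hat{X}g)_{ji}\,y_{ij},
\]
so it suffices to prove $(g^{-1}\hat{X}g)_{ji}\in I_{cox}$ for each $j>i$. Since $\det g$ is invertible in $\CC[\calXr_2]$, this is equivalent to showing $\det g\cdot(g^{-1}\hat{X}g)_{ji}\in I_{cox}$.

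I would first identify the geometric content of $F_k$. Modulo $I_g$ the matrix $g$ is upper Hessenberg, so $ge_k$ is supported in rows $1,\ldots,k+1$, and since $\hat{X}$ is strictly upper triangular with $\hat{X}e_1=0$, the column $\hat{X}e_{k+1}$ is also supported in rows $1,\ldots,k+1$. Under this reduction $F_k$ becomes exactly the determinant of the $(k+1)\times(k+1)$ matrix whose columns are $ge_1,\ldots,ge_k,\hat{X}e_{k+1}$ truncated to their first $k+1$ entries, i.e.\ it encodes the linear dependence of $\hat{X}e_{k+1}$ on $ge_1,\ldots,ge_k$ inside $\CC^{k+1}$. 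Combined with $\hat{X}ge_i=\sum_{j=1}^{i+1}g_{ji}\hat{X}e_j$ modulo $I_g$, this shows set-theoretically that $\hat{X}$ preserves the flag $gV_\bullet$ on the vanishing locus of $I_{cox}$, giving the weak statement $\Wr\in\sqrt{I_{cox}}$.

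To upgrade to the ideal itself I would expand, via the adjugate formula,
\[
\det g\cdot(g^{-1}\hat{X}g)_{ji}=\sum_{l=1}^{i+1}g_{li}\,\Bigl[\sum_{k=1}^{l}(-1)^{j+k}M^g_{kj}\,\hat{X}_{kl}\Bigr]\pmod{I_g},
\]
where $M^g_{kj}$ is the $(k,j)$-minor of $g$; the truncation of both sums uses the upper-Hessenberg form of $g$ together with the upper-triangularity of $\hat{X}$. The key observation is that, for $k\le l$ and $j\ge l$, the vanishings $g_{rc}=0$ with $r\ge l+1$, $c\le l-1$ force the submatrix underlying $M^g_{kj}$ to be block upper triangular, yielding a factorization $M^g_{kj}\equiv\det(A_k)\cdot C(l,j)\pmod{I_g}$, in which $A_k$ is precisely the $(l-1)\times(l-1)$ cofactor matrix appearing in the Laplace expansion of $F_{l-1}$ along its last column and $C(l,j)$ is an $(n-l)\times(n-l)$ minor of $g$ independent of $k$. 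Substituting and recognizing $\sum_{k=1}^l(-1)^{k+l}\det(A_k)\hat{X}_{kl}=F_{l-1}$ collapses the inner bracket to $\pm C(l,j)F_{l-1}$, so that $\det g\cdot(g^{-1}\hat{X}g)_{ji}$ becomes an explicit $\CC[\calXr_2]$-linear combination of $F_1,\ldots,F_i$, as desired.

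The main obstacle is verifying the block-triangular factorization of $M^g_{kj}$ cleanly and tracking the signs produced by reindexing rows and columns after the removal: although the Hessenberg vanishings make the lower-left block obviously zero, confirming that the rearrangement does not introduce spurious factors requires a moment of care. As sanity checks one recovers $(\det g)\cdot\Wr=g_{21}y_{12}F_1$ for $n=2$, and for $n=3$ one gets $\det g\cdot(g^{-1}\hat{X}g)_{21}=g_{21}g_{33}F_1$, $\det g\cdot(g^{-1}\hat{X}g)_{31}=-g_{21}g_{32}F_1$, and $\det g\cdot(g^{-1}\hat{X}g)_{32}=g_{32}(F_2-g_{22}F_1)$ modulo $I_g$, confirming $\Wr\in I_{cox}$ in these cases.
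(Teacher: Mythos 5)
Your proof is correct and takes a genuinely more careful route than the paper's. Both proofs begin the same way (cyclicity of the trace plus $\Tr(Y)=0$ reduce the claim to showing $g^{-1}\hat{X}g$ is upper triangular modulo $I_{cox}$), and both rest on the same observation that $F_{l-1}$ is the Laplace expansion $\sum_{k\le l}(-1)^{k+l}\det(A_k)\hat{X}_{kl}$ measuring the dependence of $\hat{X}_{\bullet,l}$ on $g_{\bullet,1},\ldots,g_{\bullet,l-1}$. The paper's own proof is then set-theoretic: on the Hessenberg locus the vanishing $F_i=0$ forces $\hat{X}_{\bullet,i+1}$ to be a combination of $g_{\bullet,1},\ldots,g_{\bullet,i}$, so $\hat{X}=gK$ with $K$ strictly upper triangular and $g^{-1}\hat{X}g=Kg$ upper triangular. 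But the coefficient matrix $K$ is rational in $g$ (one must invert the leading minors $d_i$), so as written the paper's argument only shows $\Wr$ vanishes on a dense open subset of $V(I_{cox})$, i.e.\ $\Wr\in I_{cox}\CC[\calXr_2][d_i^{-1}]$; passing back to $\Wr\in I_{cox}$ still needs a nonzerodivisor or generic-reducedness input that the paper leaves implicit. You sidestep this entirely by expanding $\det g\cdot(g^{-1}\hat{X}g)_{ji}$ via the adjugate and using the block-triangular factorization $M^g_{kj}\equiv\det(A_k)\det(B_{lj})\pmod{I_g}$ (valid for $k\le l\le j$ precisely because $g$ is Hessenberg modulo $I_g$) to collapse the inner sum to $(-1)^{j-l}\det(B_{lj})F_{l-1}$. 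This exhibits $\det g\cdot(g^{-1}\hat{X}g)_{ji}$ as an honest $\CC[\calXr_2]$-linear combination of $F_1,\ldots,F_i$ modulo $I_g$ (the $l=1$ term drops because $F_0=\hat{X}_{11}=0$), and since $\det g$ is a unit on $\calXr_2$ the ideal membership follows outright --- which is exactly the hypothesis needed to form the Koszul factorization $\mathrm{K}^{\Wr}(\{g_{ij}\}_{i-j>1},F_1,\ldots,F_{n-1})$. The block structure, the identification of $A_k$ with the cofactor matrices of $F_{l-1}$, and your $n=2$ check $(\det g)\cdot\Wr=g_{21}y_{12}F_1$ are all correct.
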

\begin{proof}
  We show below that the above equations imply that
  \[\Ad^{-1}_g(X)\in \frb.\]
  Indeed, the ideal \(I_g\) defines the sublocus \(Hess\) of Hessenberg matrices of \(G_n\). On the other hand if \(g\in Hess\) then the condition \(F_i=0\) implies that
  the column \(\hat{X}_{\bullet,i+1}\) is a linear combination of the columns \(g_{\bullet,1},\dots,g_{\bullet,i}\). Let us denote by \(K\) the matrix of these coefficients.
  Then we have \(K\) is strictly upper-triangular and \(\hat{X}=g\cdot K\).

  Hence, \(\Ad_{g}^{-1}(\hat{X})=K\cdot g\) but the product of the Hessenberg matrix and strictly upper-triangular matrix is upper-triangular.
\end{proof}

\begin{proposition}
  The functions \(\{g_{ij}\}_{i-j>1},F_1,\dots,F_{n-1}\) form a regular sequence in \(\CC[\frb\times G]\)
\end{proposition}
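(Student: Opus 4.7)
The plan is to verify regularity by a dimension count, using that $\CC[\frb\times G]$ is regular (hence Cohen--Macaulay): a sequence of $N$ elements in a CM ring is regular if and only if the quotient has Krull dimension $\dim R-N$.

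First I would dispatch $\{g_{ij}\}_{i-j>1}$: these are linear coordinate functions on $G\subset\mathrm{Mat}_n$, hence trivially a regular sequence, and their common zero locus in $\frb\times G$ is $\frb\times G_{\mathrm{Hess}}$, where $G_{\mathrm{Hess}}\subset G$ denotes the invertible Hessenberg matrices. The ring $R_0=\CC[\frb\times G_{\mathrm{Hess}}]$ is again regular.

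It then remains to prove $F_1,\dots,F_{n-1}$ form a regular sequence in $R_0$; by Cohen--Macaulayness this amounts to checking that their zero locus $V\subset\frb\times G_{\mathrm{Hess}}$ has dimension $\dim R_0-(n-1)=n(n+1)$. The lower bound $\dim V\ge n(n+1)$ is automatic from Krull's height theorem. For the matching upper bound, the preceding proposition places $V$ inside
\[
U\ :=\ \{(X,g)\in\frb\times G_{\mathrm{Hess}}\,:\,\Ad_g^{-1}(X)\in\frb\},
\]
so it suffices to show $\dim U\le n(n+1)$.

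To bound $\dim U$ I would stratify $G_{\mathrm{Hess}}$ by Bruhat cells $BwB\cap G_{\mathrm{Hess}}$, $w\in W$, and consider the projection $\pi\colon U\to G_{\mathrm{Hess}}$. Over $g\in BwB$ the fiber is $\frb\cap\Ad_g(\frb)$, the intersection of two Borel subalgebras in relative position $w$, which has dimension $\binom{n+1}{2}-\ell(w)$ by a standard computation. Combined with the crude bound $\dim(BwB\cap G_{\mathrm{Hess}})\le\dim BwB=\binom{n+1}{2}+\ell(w)$, each stratum contributes at most $n(n+1)$ to $\dim U$, the length $\ell(w)$ cancelling between fiber and base. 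Maximising over the finite Bruhat stratification yields $\dim U\le n(n+1)$ as required. The main point needing care is verifying this uniform cancellation, together with the identification of the fiber dimension with $\binom{n+1}{2}-\ell(w)$.
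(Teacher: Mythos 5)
Your proof is correct, but it takes a genuinely different route from the paper. The paper argues by induction on $n$: assuming $\{g_{ij}\}_{i-j>1}, F_1,\dots,F_{n-2}$ is regular, it covers $G_n$ by open charts $U_i = \{\det(\Delta_{in})\ne 0\}$ and observes that in each chart $F_{n-1}/g_i$ is, up to lower-order terms, the new coordinate $\hat{X}_{in}$, which does not appear in the earlier $F_j$; since regularity is local, the sequence stays regular after appending $F_{n-1}$. Your argument is global instead: you invoke the Cohen--Macaulay criterion (a sequence of $N$ elements generating a proper ideal in a CM ring is regular iff its zero locus has codimension exactly $N$), dispatch the linear forms $g_{ij}$ trivially, reduce to showing $\dim V = n(n+1)$ where $V = V(F_1,\dots,F_{n-1})\subset\frb\times G_{\text{Hess}}$, and bound $\dim V$ from above using the inclusion $V\subseteq U = \{\Ad_g^{-1}(X)\in\frb\}$ established in (the proof of) the preceding proposition, together with the Bruhat stratification of $G$ and the standard fact that $\dim(\frb\cap\Ad_w\frb)=\binom{n+1}{2}-\ell(w)$, so that the lengths cancel between fiber and base. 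Two small points you should make explicit. First, the inclusion $V\subseteq U$ relies on the fact that for any invertible Hessenberg $g$ the truncated columns $g_{\bullet,1}^{(i+1)},\dots,g_{\bullet,i}^{(i+1)}$ are automatically linearly independent (this follows from the Hessenberg zero pattern plus invertibility of $g$), so that $F_i=0$ really does force $\hat X_{\bullet,i+1}$ to lie in their span; without this observation the set-theoretic inclusion is not immediate. Second, the CM criterion as you stated it should really be phrased in terms of height (every minimal prime over the ideal has height $N$), i.e. every irreducible component of $V$ must have dimension at most $n(n+1)$; your bound via $U$ does supply this component-wise, so the argument is sound, but the phrasing deserves tightening. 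What your approach buys is a conceptual, induction-free proof; what it costs is the need for the CM machinery and the Bruhat/Borel-intersection dimension count, whereas the paper's chart-wise induction is more elementary and self-contained.
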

\begin{proof}
  We proceed by induction. We assume that \(\{g_{ij}\}_{i-j>1},F_1,\dots,F_{n-2}\) form a regular sequence. Then we observe that \(G_n\) is covered by the open sets
  \(U_i\) defined by \(g_i=\det(\Delta_{in})\ne 0\) where \(\Delta_{in}\) is the minor of \(M_n\) obtained by removal of \(in\) entry.
  It is enough to show regularity at every open chart. But in the chart \(U_i\) we have \(F_n/g_{i}=\hat{X}_{in}+\dots\) and \(F_i\), \(i<n\) do not depend on
  \(\hat{X}_{\bullet,n}\). Hence the regularity follows.
\end{proof}

  Thus we can apply Lemma 2.2 from \cite{OR16} to imply that there is a unique
  up to homotopy Koszul matrix factorization \(\mathrm{K}^{\Wr}(\{g_{ij}\}_{i-j>1},F_1,\dots,F_{n-1})\) and we show the following

\begin{theorem} \label{thm:main}  There is a strictly equivariant Koszul  matrix factorization that realizes \(\bclC_{cox}\):
  \[\bclC_{cox}=\mathrm{K}^{\Wr}(\{g_{ij}\}_{i-j>1},F_1,\dots,F_{n-1}).\]
\end{theorem}

The construction of the induction functors implies the following

\begin{corollary} For any \(S\subset\{1,\dots,n-1\}\) we have a strictly equivariant matrix factorization:
  \[\bclC_{cox_S}=\mathrm{K}^{\Wr}(\{g_{ij}\}_{ij\in S'},\{F_i\}_{i\notin S}),\]
  where \(S'=\{ij\}_{i-j>1}\cup \{ii+1\}_{i\in S}\).
\end{corollary}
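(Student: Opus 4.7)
The plan is to derive the corollary from Theorem~\ref{thm:main} by iterating the induction functor $\overline{\ind}_k$ introduced in Section~\ref{sec:induction-functors}. First I would write $S=\{k_1<\cdots<k_m\}$ and observe that $\cox_S$ factors as a commuting product of full Coxeter braids, $\cox^{(1)}\cdots\cox^{(m+1)}$, where $\cox^{(j)}$ acts only on the strands in the block $(k_{j-1},k_j]$ (with the convention $k_0=0$ and $k_{m+1}=n$); the factors commute because their supports are disjoint intervals.

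Next, by the homomorphism property of the induction functor (Proposition~6.2 of~\cite{OR16}), iterated at the boundaries $k_1,\ldots,k_m$, one obtains
\[
\bclC_{\cox_S}=\overline{\ind}_{k_1}\bigl(\bclC_{\cox^{(1)}}\boxtimes\overline{\ind}_{k_2-k_1}\bigl(\bclC_{\cox^{(2)}}\boxtimes\cdots\bigr)\bigr).
\]
By Theorem~\ref{thm:main} applied to each block of size $n_j$, each factor $\bclC_{\cox^{(j)}}$ is a Koszul matrix factorization whose generators are the within-block Hessenberg entries $g_{ij}$ (with $i-j>1$) together with the within-block Coxeter determinants $F^{(j)}_{1},\ldots,F^{(j)}_{n_j-1}$. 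The pullback $\bar{p}^{*}$ to $\calXr_{2}(P_{k_1,\ldots,k_m})$ preserves the Koszul structure, its generators being the union of the block generators, and the subsequent push-forward $\bar{i}_{*}$ along $\calXr_{2}(P_{k_1,\ldots,k_m})\hookrightarrow\calXr_{2}(G_n)$ Koszul-extends by the defining equations of the block parabolic, namely $g_{ij}=0$ for all $(i,j)$ in the off-diagonal blocks (equivalently, those with $j\leq k_l<i$ for some $l$).

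The final step is to identify the combined Koszul matrix factorization with $\mathrm{K}^{\Wr}(\{g_{ij}\}_{ij\in S'},\{F_i\}_{i\notin S})$. The $g$-generators reassemble into $\{g_{ij}\}_{i-j>1}\cup\{g_{k+1,k}\}_{k\in S}$, since the within-block Hessenberg entries together with the full off-diagonal-block entries cover precisely the strictly sub-subdiagonal entries and the subdiagonal entries at block separators. The main technical obstacle is to match the within-block determinants $F^{(j)}_i$ produced by the induction with the full-matrix $F_i$ of Theorem~\ref{thm:main} for $i\notin S$: on the block-parabolic locus one has a factorization $F_i=(\text{product of principal block minors of }g)\cdot F^{(j)}_{i-k_{j-1}}$, and since $\det g$ equals the product of the principal block minors on this locus, each such minor is non-vanishing on the support of the matrix factorization. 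Invoking the homotopy invariance of Koszul presentations under rescaling of generators by units on support and addition of elements of the Koszul ideal (cf.\ Lemma~2.2 of~\cite{OR16}), the two presentations agree, yielding the claimed equality.
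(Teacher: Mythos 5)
The overall strategy --- factor $\cox_S$ into commuting block Coxeter elements, iterate the induction functor, feed Theorem~\ref{thm:main} into each block, and observe that the push-forward along the parabolic embedding Koszul-extends by the off-diagonal-block entries of $g$ --- is exactly the ``construction of the induction functors implies'' argument the paper intends, and your reassembly of the $g$-generators into $S'$ is correct.

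The gap is in the last step, where you assert that on the block-parabolic locus the global determinant factors as $F_i=(\text{product of principal block minors of }g)\cdot F^{(j)}_{i-k_{j-1}}$, and that any residual discrepancy is absorbed by rescaling by units and working modulo the Koszul ideal (citing Lemma~2.2 of \cite{OR16}). This fails because of the centering of $\hat X$: the theorem's $F_i$ uses $\hat X=X-x_{11}\Id_n$, while the determinant actually produced by $\bar p_k^*$ for the $j$-th block uses $X^{(j)}-x_{k_{j-1}+1,k_{j-1}+1}\Id$, and the constant shift in the last column of the $(i+1)\times(i+1)$ matrix is \emph{not} an element of the remaining Koszul ideal. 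Concretely, for $n=3$, $S=\{1\}$, on $\{g_{21}=g_{31}=0\}$ one has
\[
F_2=g_{11}\bigl(g_{22}(x_{33}-x_{11})-g_{32}x_{23}\bigr),\qquad F^{(2)}_1=g_{22}(x_{33}-x_{22})-g_{32}x_{23},
\]
so $F_2-g_{11}F^{(2)}_1=g_{11}g_{22}(x_{22}-x_{11})$, which does not lie in $(g_{21},g_{31},F^{(2)}_1)$ (evaluate at $g$ lower-unitriangular, $x_{33}=x_{22}\ne x_{11}$, $x_{23}=0$). Hence the two regular sequences generate different ideals and the corresponding Koszul matrix factorizations are not homotopy equivalent; Lemma~2.2 of \cite{OR16} only licenses changing the $b$-column of a Koszul matrix and cannot repair this.

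What the induction functors actually produce is the block-centered sequence, and that is the version needed for the rest of the paper: in the proof of the proposition of Section~\ref{sec:link} the determinant generators must restrict at $g=1$ to the consecutive differences $x_{ii}-x_{i+1,i+1}$, $i\notin S$, which is the block-centered restriction, whereas $F_i|_{g=1}=x_{i+1,i+1}-x_{11}$ gives a strictly different ideal once $S\ne\emptyset$. So either re-center $\hat X$ block-by-block in the statement (each $F_i$ built from $X-x_{i'i'}\Id$ with $i'$ the first index of the block containing $i+1$) and then your argument goes through verbatim, or supply a genuinely different comparison; the factorization as you wrote it is false.
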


Before we proceed to  the proof let us describe the most efficient method of computation of the matrix factorization corresponding to the braid \(\beta=\alpha\cdot\sigma_k^\ep\) from
already known \(\calC_\alpha\). The justification of the construction is given in the section 8 of \cite{OR16}.

Indeed, let \(\calF=(M,D_1,\partial_l,\partial_r)\in \MF(\calXr_2(G_n),\Wr)\) where
\(\partial_l,\partial_r\in \Hom_{\CC[\calXr_2(G_n)]}(\Lambda^*\frn\otimes M,M) \) are correcting differential for the equivariant structure. Respectively, \(\bclC_\ep=(R^2_\ep,D_2,0,0)\in\MF_{B_2^2}(\calXr(G_2),\Wr)\)
is strongly equivariant matrix factorization corresponding to elementary braid \(\sigma_k^\ep\), here \(R_\ep\) is the ring \(\CC[\calXr_2(G_2)]\) with appropriately twisted \(B_2^2\)-structure.

The auxiliary space \(\calXr_3(G_n,G_{k,k+1}):=\frb\times G_n\times G_2\times \frn\) is naturally embedded into the convolution space \(\calXr_3\) via map
\(i_{k,k+1}:=\Id^2\times i'_{k,k+1}\times \Id\) where \(j'_{k,k+1}: G_2\rightarrow G_n\) is the embedding of \(G_2\) as \(2\times 2\)-block with entries at positions
\(ij, i,j\in\{k,k+1\}\). Hence we can restrict the maps \(\bar{\pi}_{ij}\) on the auxiliary space, we can also
endow the auxiliary space  with \(B_n\times B_2\times B_n\)-equivariant structure  by restriction from the large space.

The maps \(\bar{\pi}_{ij}\) are \(B_n^2\)-equivariant but not \(B_2\)-equivariant, thus a priori the tensor product
\(\bar{\pi}_{12}^*(\calF)\otimes\bar{\pi}_{23}^*(\calC_\ep)\) has only \(B_n^2\)-equivariant structure. But as
explained in section 8 of \cite{OR16} there is a natural  \(B_n\times B_2\times B_n\)-equivariant matrix factorization \(\calG\) that could be imposed on
\(\bar{\pi}_{12}^*(\calF)\otimes\bar{\pi}_{23}^*(\calC_\ep)\):
\[\calG:=(\bar{\pi}_{12}^*(M)\otimes\bar{\pi}_{23}^*(R_\ep),\bar{\pi}_{12}^*(D_1)+\bar{\pi}_{23}^*(D_2);\partial_l,\partial'_r+\partial',0)\]
where \(\partial'_r\in \Hom_{R_3}(\frn_2\otimes M',M')\), \(M'=M\oplus M=\bar{\pi}_{12}^*(M)\otimes\bar{\pi}_{23}^*(R_\ep)\)
\(R_3=\CC[\calXr_3(G_n,G_{k,k+1})]\) is the restriction of map \(\partial_r\) on the subalgebra  \(\frn_2\)
and \(\partial'\in \Hom_{R_3}(\frn_2\otimes M',M')\) is defined by the formula:
\[\partial':=\bar{\pi}_{12}^*(\frac{\partial D_1}{\partial \tilde{Y}_{k,k+1}})(\tilde{Y}^2_{k+1,k+1}-\tilde{Y}^2_{kk})+\bar{\pi}_{23}\left(\frac{\partial D_2}{\partial \tilde{X}_{kk}}-\frac{\partial D_2}{\partial \tilde{X}_{k+1,k+1}}\right),\]
where \(\tilde{X}^2=\Ad_{g_{12}}(X)\), \(\tilde{Y}^2:=\Ad_{g_{12}}(Y)\) and \(X,g_{12},g_{23},Y\) are the coordinates on \(\calXr_3(G_n,G_{k,k+1})\).

The key observation about this matrix factorization is that up to homotopy we have (see section 8 of \cite{OR16})
\[\bclC_\beta=\bar{\pi}_{13*}(\CE_{\frn_2}(\calG)^{T_2}).\]
Thus we reduce the complexity of the computation of matrix factorization \(\bclC_\beta\), we only need to analyze rank one the Chevalley-Eilenberg complex for \(\frn_2\) and we use this
method in our proof

\begin{proof}[Proof of theorem~\ref{thm:main}]
  Let us first notice that the case \(n=2\) of the theorem is a tautology. The case \(n=3\) was proven in \cite{OR16} in the section 10. For general \(n\) our inductive argument is
  essentially identical to the computation from the section 10 of \cite{OR16}.

  Let \(\alpha=\cox_{n-1}\) then by the induction and the corollary we have a presentation of \(\bclC_\alpha\) as a strongly equivariant Koszul matrix factorization. As induction step we  need to analyze the equivariant matrix factorization \(C_{12}:=\bar{\pi}_{12}^*(\bclC_\alpha)\otimes\bar{\pi}_{23}^*(\bclC_+)\) (with the appropriate \(B_2\)-equivariant structure) on the auxiliary space \(\calXr_3(G_n,G_{n-1,n})\).

 We introduce coordinates on our auxiliary space \(\calXr_3(G_n,G_{n-1,n})\) as follows: \(\calXr_3(G_n,G_{n-1,n})=\{(X,g_{12},g_{23},Y)\}\) where \(g_{12}=(a_{ij})_{i,j\in [1,n]}\)
  \(g_{23}=(b_{ij})_{i,j\in [1,n]}\) and \(b_{ij}=\delta_{ij}\) if \(i,j<n-1\).  Let us also fix notation \(g_{13}:=g_{12}g_{23}=(c_{ij})_{i,j\in[1,n]}\) and
  \[\tilde{X}_2:=\Ad^{-1}_{g_{12}}(X)=(\tilde{x}_{ij})_{i,j\in [1,n]},\quad \tilde{Y}_2=\Ad_{g_{23}}(Y)=(\tilde{y})_{i,j\in [1,n]}.\]
We also use shorthand notations  \(\Delta_a=\det(a),\Delta_b=\det(b),\Delta_c=\det(c).\)

  The matrix factorization \(C_{12}\) is the Koszul matrix factorization \[C_{12}=\mathrm{K}^{\bar{\pi}_{13}^*(\Wr)}\left(\{a_{ij}\}_{ij\in S'},\{F_i(X,a)\}_{i\in[1,n-2]},\tilde{f}\right)\] where
  \(\tilde{f}:=(\tilde{x}_{n-1,n-1}-\tilde{x}_{nn})b_{n-1,n-1}+\tilde{x}_{n-1,n}b_{n,n-1}\) and \(S'=\{i-j>1\}\cup \{(n,n-1)\}\).
  Next let us notice that \(a_{\bullet,i}=c_{\bullet,i}\), \(i\leq n-2\) and since \(F_i(X,a)\) depends
  on \(a_{\bullet,j}\), \(j\leq i\) we obtain a presentation of the complex \(C_{12}\) as a tensor product
  \[ \mathrm{K}^{W'}\left(\{c_{ij}\}_{i-j>1},\{F_i(X,c)_{i\in [1,n-2]}\}\right)\otimes \mathrm{K}^{W''}(a_{n,n-1},\tilde{f}),\]
  where \(W'+W''=\bar{\pi}_{13}^*(\Wr)\) and we can assume that \(W'\) only depends on \(c,X\) but not on \(b\).

  Let's denote the first term in the product
  by \(C'_{12}\) and the second term by \(C''_{12}\). The complex \(C'_{12}\) is \(\frn_2\)-invariant thus \(\CE_{\frn_2}(C_{12})=C'_{12}\otimes \CE_{\frn_2}(C''_{12})\) and to
  complete our proof we need to analyze the last complex in the product. In particular we need an understand \(\frn_2\)-equivariant structure  of \(C''_{12}\)-complex.

  Let \(h\) be an element of \(B_2\subseteq B_n\), that is \(h_{ij}=0\) if \(i,j<n-1\) and \(i\ne j \) or
  \(ij=n,n-1\). The action of \(h\) on the space \(\calXr_2(G_n,G_{n-1,n})\) is given by the formulas:
  \[g_{12}\mapsto g_{12}h^{-1},\quad g_{23}\mapsto hg_{23},\quad \tilde{X}_2\mapsto \Ad_h\tilde{X}_2.\]
We denote by \(\delta\) the element of \(Lie(B_2)\) corresponding to the \((n-1,n)\)-entry and below we investigate its action on the complex \(C''_{12}\).

First, let us notice that the function \(a_{n,n-1}\) is \(\frn_2\)-invariant but the function \(\tilde{f}\) is not. Thus the complex \(C''_{12}\) is not strongly \(\frn\)-equivariant
and correction differentials will appear. In more details we have
\[C''_{12}=
  \begin{bmatrix}
    a_{n,n-1}&*&\theta_1\\\tilde{f}&*&\theta_2
  \end{bmatrix}
,\]
where the action of \(\frn\) is given by:
\[\delta(\theta_1)=k\theta_2,\quad\delta(\theta_2)=0 \]
for some function \(k\in \CC[\frb\times G]\) which we need to compute.

One way to approach the computation of \(k\) is use differentials of \(\frn_2\)-equivariant structure  on \(C_{12}\) from the discussion before the proof and derive a formula for
\(k\) by the careful analysis of the effects of the elementary transformations on the differentials. However, we choose different method, we follow the same path as in the proof
of Lemma 10.4 from \cite{OR16}. Namely, the function \(k\) is uniquely defined by the condition that \(a_{n,n-1}\theta_1+\tilde{f}\theta_2\) is \(\delta\)-invariant. Thus
we only need to compute \(\delta(\tilde{f})\).

Instead of computing \(\delta(\tilde{f})\) by brute force we use the following argument. First we present the matrix \(\tilde{X}_2\) as a sum of the upper-triangular and strictly lower-triangular
parts: \(\tilde{X}_2=\tilde{X}_{2,+}+\tilde{X}_{2,--}\).  Next we observe that \(\tilde{f}b_{n,n-1}=-\left(\Ad^{-1}_{g_{23}}\tilde{X}_{2,+}\right)_{n,n-1}\) and since \(\delta(b_{n,n-1})=0\) we have:
\[\delta(\tilde{f})=-\delta\left(\Ad^{-1}_{g_{23}}(\tilde{X}_{2,+})\right)_{n,n-1}/b_{n,n-1}.\]

On the other hand \(\Ad^{-1}_{g_{23}}(\tilde{X}_2)\) is \(\delta\)-invariant thus get
\[\delta(\tilde{f})=\delta\left(\Ad^{-1}(\tilde{X}_{2,--})\right)_{n,n-1}/b_{n,n-1}.\]

A direct computation shows that
\(\Ad^{-1}_{g_{23}}(\tilde{X}_{2,--})_{n,n-1}=b_{n-1,n-1}^2\tilde{x}_{n,n-1}/\Delta_b\) and since \(\tilde{x}_{n,n-1}\) is \(\delta\)-invariant while \(\delta b_{n-1,n-1}=b_{n,n-1}\), we obtain
\[\delta(\tilde{f})=2\tilde{x}_{n,n-1}b_{n-1,n-1}/\Delta_b.\]

Modulo, relations from \(I_g\) the matrix element \((a^{-1})_{nk}\), \(k<n\) is divisible by \(a_{n,n-1}\): \((a^{-1})_{nk}=(-1)^{k+n}a_{n,n-1}\det(M_{n,n-1}^{n,k}(a))/\Delta_a\) where
\(M_{ij}^{kl}(a)\) is the minor of \(a\) obtained by removing \(i,j\)-th columns and \(k,l\)-th rows. By putting all formulas together we finally obtain a formula for \(k\):
\[k=-2b_{n-1,n-1}\Delta_b^{-1}\left((a^{-1})_{nn}x_{nn}+\sum_{k=1}^{n-1}(-1)^{k+n}\det(M_{n,n-1}^{n,k}(a))/\Delta_a\sum_{l=k}^nx_{kl}a_{l,n-1}\right).\]

Now recall the action of torus \(T^{(2)}=(\CC)^2\subset B_2\subset B\) on \(a_{n,n-1}\) and \(\tilde{f}\) has weights \(\ep_{n}\) and \(\ep_{n-1}\) and respectively the weights of \(\theta_1,\theta_2\)
are \(-\ep_{n}\) and \(-\ep_{n-1}\). Thus \(T^{(2)}\)-invariant part of the complex \(\CE_{\frn_2}(C''_{12})\) is of the shape

$$
\begin{tikzcd}[row sep=scriptsize, column sep=scriptsize]
&\left[ \theta_1;0\right]\arrow[dl] \arrow[rr,shift left=0.5ex] \arrow[dd, dashed, two heads] & & \left[1;\ep_{n-1}\right] \arrow[dl] \arrow[dd, dashed,blue]\arrow[ll,shift left=0.5ex,blue] \\
\left[\theta_1\theta_2;-\ep_n\right]\arrow[ur, shift left=1ex,red]  \arrow[dd, dashed, red] & & \left[\theta_2;\ep_{n-1}-\ep_n\right] \arrow[ur,shift right=1ex]
\arrow[dd, dashed,red]\\
& \left[\theta_1;\ep_n-\ep_{n-1}\right] e^* \arrow[dl] \arrow[rr, shift left=0.5ex] &  & \left[1;\ep_n\right] e^* \arrow[dl]\arrow[ll, shift left=0.5ex] \\
\left[\theta_1\theta_2;\ep_{n-1}\right]e^* \arrow[ur,shift right=1ex] \arrow[rr, shift left=0.5ex] & & \left[\theta_2,0\right]e^*\arrow[ur, shift right=1ex, blue]
\arrow[ll, shift left=0.5ex,red]
\\
\arrow[to=4-3, from=1-2, dashed, crossing over,green]
\arrow[from=2-1, to=2-3, shift left=0.5ex,crossing over]
\arrow[to=2-1, from=2-3, shift left=0.5ex,crossing over]
\end{tikzcd}
$$
where the expression \([\alpha,\rho]\) stands for \(R[\rho]\alpha\), \(R=\CC[\calXr(G_n,G_{n,n-1})]\) and \(R[\rho]\) is the part of \(R\) of weight \(\rho\);
\(e^*\) is the basis of \(\frn^*=\Hom(\frn,\CC)\). In the picture the dashed arrows are the Chevalley-Eilenberg differentials.

In the tensor product \(\CC[\calXr_2]=\bar{\pi}_{13}^*(\CC[\calXr_2])]\otimes \CC[G_2]\) the first term is \(B_2\)-invariant. Hence since the vertical arrows in the diagram above
compute homology \(H^*(G_2/B,\calO(k))\) for the corresponding value of \(k\) (that could read from the
bottom of side of cube in the diagram), after contracting the vertical arrows we arrive to the diagram:
\[
  \begin{tikzcd}
    H^*(\PP^1,\calO(-2))\otimes \CC[\calXr_2]\arrow[r,shift left=0.5 ex]\arrow[rd]\arrow[d]& H^*(\PP^1,\calO(-1))\otimes \CC[\calXr_2]\arrow[l]\arrow[d]\\
    H^*(\PP^1,\calO(-1))\otimes \CC[\calXr_2]\arrow[u,shift left=0.5 ex]\arrow[r,shift left=0.5 ex]& H^*(\PP^1,\calO(0))\CC[\calXr_2]\arrow[l]\arrow[u,shift left=0.5ex].
  \end{tikzcd}
\]

Since only two vertices of the last diagram are actually non-zero we only need to compute the diagonal arrow.  The target of this arrow
 is $\textup{H}^1(\mathbb{P}^1,\calO(-2))\otimes \bar{\pi}_{13}^*(\CC[\calX_2])=\textup{H}^1_{\Lie}(\frn,R[\ep_{n-1}-\ep_n])$, hence we can
  replace the coefficients of the differential by the expressions that are homologous with respect to the differential $\delta$. Below we take advantage of this observation.
 Indeed,  note that
\[
\delta b_{n-1,n-1} = b_{n,n-1},\qquad \delta b_{n-1,n} =b_{nn},
\]
so, first, $\delta(b_{n-1,n-1}^2) = 2b_{n-1,n-1}b_{n,n-1}$, hence $b_{n-1,n-1}b_{n,n-1}$ is exact and, second, \[\delta(b_{n-1,n-1}b_{nn}) =
b_{n,n-1}b_{n-1,n} + b_{n-1,n-1}b_{nn},\] hence in view of $b_{n-1,n-1}b_{nn}-b_{n,n-1}b_{n-1,n}=\Delta_b$ we find $\exbtt\exbhh\sim \frac{1}{2}\Delta_b$. Since
\(b_{n-1,n-1}a=c\cdot(b_{n-1,n-1}b^{-1})\) we obtain:
\[2b_{n-1,n-1}a_{i,n-1}\sim  c_{i,n-1}. \]

Next let us notice that since \(a_{\bullet,i}=c_{\bullet,i}\) for \(i<n-1\) by expanding along the \(n-2\)-th column  of the determinant in the definition of \((a^{-1})_{nn}\),
we can use above homotopy equivalence we get
\[2b_{n-1,n-1} (a^{-1})_{nn}\sim \Delta_c\Delta_a^{-1}(c^{-1})_{nn}=\Delta_b(c^{-1})_{nn}.\]
We can combine the last formula with the observation that \(M_{n,n-1}^{n,k}(a)=M_{n,n-1}^{n,k}(c)\) to obtain
\[k\sim(c^{-1})_{nn}x_{nn}+\sum_{k=1}^{n-1}(-1)^{k+n}\det(M_{n,n-1}^{n,k}(c))/\Delta_c\sum_{l=k}^nx_{kl}c_{l,n-1}.\]

Next let us observe that if we collect all the terms in the last sum with \(l=n\) we obtain:
\[\sum_{k=1}^{n-1}(c^{-1})_{nk}x_{kn}=F_{n-1}(x,c)/\Delta_c+(c^{-1})_{nn}(-x_{nn}+x_{11}).\]
On the other hand if collect all the terms in the sum with \(l=s\) for \(s\ne n\) we get:
\[c_{s,n-1}\Delta^{-1}_c\sum_{k=1}^{n-1}(-1)^{k+n}\det(M_{n,n-1}^{n,k}(c))x_{ks}=(-1)^{s+n}\Delta^{-1}(c)c_{s,n-1}\det(M_{n,n-1}^{n,s}(c)x_{11})\mbox{ mod } (F_{s-1}).\]
Thus combination of the last two observations implies that modulo ideal \(I_g+(F_1,\dots,F_{n-2})\) we have the following homotopy:
\begin{multline*}
  k\sim \left((c^{-1})_{nn}x_{nn}\right)+\left(F_{n-1}(x,c)/\Delta_c+(c^{-1})_{nn}(-x_{nn}+x_{11})\right)\\
  +\left(\Delta_c^{-1}x_{11}\sum_{s=1}^{n-1}(-1)^{n+s} c_{s,n-1}\det(M_{n,n-1}^{n,s}(c))\right)=F_{n-1}(x,c)/\Delta_c^{-1}.
\end{multline*}

Finally, let us remark that \(B^2\) preserves \(F_i\) and acts linearly on the generators of \(I_g\). Thus \(\mathrm{K}^{\Wr}(\{g_{ij}\}_{i-j>1},F_1,\dots,F_{n-1})\)
is strictly \(B^2\)-equivariant.
\end{proof}

\section{Link homology computation}
\label{sec:link}

\subsection{Link homology}
\label{sec:link-homology}
In this subsection we remind our construction for link invariant from \cite{OR16} and its connection with sheaves on the nested Hilbert  scheme.

The free nested Hilbert scheme $\bHilb_{1,n}^{free}$ is a $B\times \CC^*$-quotient of the sublocus
$\widetilde{\bHilb_{1,n}^{free}}\subset \frb_n\times\frn_n\times V_n$ of the cyclic triples $\{(X,Y,v)|\CC\langle X,Y\rangle v=V_n\}$.
The usual nested Hilbert scheme $\bHilb^L_{1,n}$ is the subvariety of $\bHilb^{free}_{1,n}$, it is defined by the commutativity of
the matrices $X,Y$. Thus we have pull-back morphism:
\[ j^*_e: \MF_{B^2}(\calXr_n\times V_n,\Wr)\rightarrow \MF_{B}(\widetilde{\bHilb^{free}_{1,n}},0).\]

The complex \(\mathbb{S}_\beta:=j^*(\bclC_\beta)\) is naturally an element of the derived category \(D^{per}_{T_{sc}}(\Hilb_{1,n}^{free})\) of two-periodic complexes of coherent sheaves  on
\(\Hilb_{1,n}^{free}\). The hyper-cohomology functor \(\mathbb{H}\) is the functor \(D^{per}_{T_{sc}}(\Hilb_{1,n})\rightarrow \mathrm{Vect}_{gr}\) to the space of doubly-graded vector
spaces. There an obvious analog of vector bundle   \(\calB\) over \( \Hilb_{1,n}^{free}\) and we define
\[\mathbb{H}^k(\beta):=\mathbb{H}\left(\CE_{\frn}\left(\mathbb{S}_\beta\otimes \Lambda^k\calB\right)^T\right).\]

The main result of \cite{OR16} is the following
\begin{theorem}[\cite{OR16}]
  For any \(\beta\in \Br_n\) we have
  \begin{itemize}
  \item The cohomology    of the complex \(\mathbb{S}_\beta\) is supported on \(\Hilb_{1,n}\subset \Hilb_{1,n}^{free}\).
  \item The vector space \(\mathbb{H}^*(\beta)\) is (up to an explicit grading shift) an isotopy invariant of the closure \(L(\beta)\).
  \end{itemize}
\end{theorem}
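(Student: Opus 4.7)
The plan is to prove the two assertions in sequence, reducing both to the elementary braid generators $\sigma_i^{\pm 1}$ and then exploiting functoriality of the convolution $\bst$ and of the hypercohomology functor $\mathbb{H}$.

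For the support claim, I would argue by induction on the length of a braid word for $\beta$. For the base case $\beta = \sigma_i^{\pm 1}$, the matrix factorization $\bclC_\pm^{(i)}$ is Koszul with differential $D$ satisfying $D^2 = \Wr = \Tr(X\Ad_g(Y))$. Pulling back along $j_e$ sends $\Wr$ to zero on the image, and the Koszul factors $a_j, b_j$ of $D$ specialize to expressions whose common vanishing locus inside $\widetilde{\Hilb_{1,n}^{free}}$ is precisely $\{[X,Y]=0\}$; consequently the two-periodic complex $\mathbb{S}_{\sigma_i^{\pm 1}}$ is acyclic off $\Hilb_{1,n}$. For the inductive step I would invoke the fact that $\bst$ is computed as $\bar{\pi}_{13*}(\CE_{\frn^{(2)}}(\bar{\pi}_{12}^*(-)\otimes_B\bar{\pi}_{23}^*(-))^{T^{(2)}})$, and that each of pullback, tensor product and proper pushforward preserves set-theoretic support on the commuting-pair subscheme once one passes to the free Hilbert model, using that the commutator entries $[X,Y]_{ij}$ form a $B$-invariant ideal on $\widetilde{\Hilb_{1,n}^{free}}$.

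For the isotopy invariance of $\mathbb{H}^*(\beta)$, the strategy is to verify the braid relations together with both Markov moves. The braid and inversion identities for $\bclC_\pm^{(k)}$ have already been established in Sections 9 and 11 of \cite{OR16}, so $\bclC_\beta$, $\mathbb{S}_\beta$ and $\mathbb{H}^*(\beta)$ depend only on the underlying element $\beta\in\Br_n$. Markov conjugation invariance $\mathbb{H}^*(\alpha\beta) = \mathbb{H}^*(\beta\alpha)$ follows because $\mathbb{H}^*(-)=\mathbb{H}(\CE_\frn(-\otimes\Lambda^\bullet\calB)^T)$ is a categorical trace on the convolution algebra: the cyclic structure of $(\MF_{B^2}(\calXr_2,\Wr),\bst)$ combined with the $T$-invariants and the $\frn$-averaging produces a genuine Hochschild-type trace, so $\mathbb{H}^*(\mathcal{F}\bst\mathcal{G})=\mathbb{H}^*(\mathcal{G}\bst\mathcal{F})$ for all $\mathcal{F},\mathcal{G}$. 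For Markov stabilization I would realize $\bclC_{\beta\sigma_n^{\pm 1}}$ via the induction functor $\overline{\Ind}_{n,n+1}(\bclC_\beta\boxtimes\bclC_\pm)$, then compute the hypercohomology contribution of the added two-strand factor; this amounts to an explicit calculation on $\Hilb_{1,2}^{free}$ that reproduces $\mathbb{H}^*(\beta)$ on $n$ strands weighted by an explicit monomial in $q,t,a$ matching the topological framing shift of the Markov II move.

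The hardest step, I expect, is the propagation of support in the first claim through iterated convolutions. The auxiliary space $\calXr_3$ entering $\bst$ carries independent coordinates $g_{12},g_{23}$ that are not constrained by the target Hilbert scheme, and one must show that the averaging $\CE_{\frn^{(2)}}(-)^{T^{(2)}}$ together with the cyclic-vector stability condition collapses these extra directions compatibly with the commuting-pair ideal. The cleanest route I can see is to rephrase the convolution on the free-Hilbert side from the start, so that each $[X,Y]_{ij}$ appears as an explicit Koszul coefficient in the differential, and then to use that tensor product and proper pushforward of matrix factorizations preserve that ideal sheaf; the remaining content of the inductive step is then just weight bookkeeping under the $T_{sc}$-action together with the standard fact that $\CE_\frn(-)^T$ is exact on free $\frn$-modules.
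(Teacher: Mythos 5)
This theorem is not proved in the present paper at all: it is quoted verbatim from \cite{OR16}, so there is no internal proof to compare against and any honest ``proof'' here is really a summary of that (long) paper. Judged against the actual argument of \cite{OR16}, your proposal has a genuine gap in the first item. The induction over braid words, propagating ``support on the commuting locus'' through convolutions, does not parse: at the intermediate stages of \(\bst\) the objects are matrix factorizations with \emph{nonzero} potential \(\bar{\pi}_{13}^*(\Wr)\), so their ``cohomology'' (hence the set-theoretic support of cohomology) is not defined, and the statement that tensor product and proper pushforward ``preserve the ideal sheaf \([X,Y]_{ij}\)'' has no meaning in \(\MFs(\,\cdot\,,W)\) with \(W\neq 0\). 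Your proposed repair --- rewrite the convolution on the free-Hilbert side so that every \([X,Y]_{ij}\) appears as an explicit Koszul coefficient of \(\bclC_\beta\) --- is ruled out by a fact used in this very paper (Section~\ref{sec:coxet-matr-fact}): \(\bclC_\beta\) is \emph{not} Koszul for general \(\beta\); only for (quasi-)Coxeter braids does one get such a presentation, and proving that is the main theorem here, not an available tool. The correct argument is uniform rather than inductive: for \emph{any} object of \(\MF_{B^2}(\calXr_2\times V,\Wr)\), the pullback \(j_e^*\) restricts to a locus where the potential vanishes (at \(g=1\) one has \(\Wr=\Tr(XY)=0\) since \(X\in\frb\), \(Y\in\frn\)), and a two-periodic complex obtained by restricting a matrix factorization to the zero locus of its potential is locally contractible wherever the differential of the potential in the normal directions is nonvanishing; those normal derivatives \(\partial\Wr/\partial g_{ij}|_{g=1}\) are exactly the commutator entries \([X,Y]_{ij}\). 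Hence the homology of \(\mathbb{S}_\beta\) is supported on \(\{[X,Y]=0\}\), i.e.\ on \(\Hilb_{1,n}\), for all \(\beta\) simultaneously, with no induction and no bookkeeping on \(\calXr_3\).

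For the second item your skeleton (braid and inversion relations, then the two Markov moves) is the right architecture and matches \cite{OR16}, but the two steps you treat as routine are precisely the content: the cyclicity \(\mathbb{H}^*(\calF\bst\calG)=\mathbb{H}^*(\calG\bst\calF)\) is not a formal consequence of calling \(\CE_\frn((-)\otimes\Lambda^\bullet\calB)^T\circ j_e^*\) a ``Hochschild-type trace'' --- one has to prove that this composite is a trace on the convolution algebra, which in \cite{OR16} requires a real argument comparing the reduced and non-reduced convolutions and the equivariant pushforwards; and the Markov~II step is a nontrivial computation of the partial pushforward of \(\overline{\Ind}_{n,n+1}(\bclC_\beta\boxtimes\bclC_\pm)\), where the asserted ``explicit monomial'' shift (different for \(\sigma_n\) and \(\sigma_n^{-1}\)) must actually be produced. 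As a blind proposal, item~2 is a reasonable outline with the hard lemmas asserted rather than proved; item~1 should be replaced by the critical-locus support argument above.
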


\subsection{Koszul complex for link homology}
\label{sec:koszul-complex-link}

The virtual structure sheaf \([\calO_{Z_{1,n}^S}]^{vir}\) of the subscheme \(Z_{1,n}^S\subset \Hilb_{1,n}\) is defined as Koszul complex of
the equivariant coherent sheaves  on \(\Hilb_{1,n}^{free}\):
\[[\calO_{Z_{1,n}^{S}}]^{vir}:=\mathrm{K}(\{x_{ii}-x_{i+1,i+1}\}_{i\notin S}, \{[X,Y]_{ij}\}_{(ij)\in \tilde{S}}),\]
where \(X=(x_{ij})\), \(Y=(y_{ij})\) are the coordinates on \(\frb\) and \(\frn\) respectively and
\begin{equation}\label{eq:tilS}
  \tilde{S}=\{(ij)\}_{i-j>1},\{i+1,i\}_{i\in S}.
  \end{equation}

The zeroth homology of \([\calO_{Z_{1,n}}^S]^{vir}\)  is the structure sheaf  of \(Z_{1,n}^S\) but the complex has higher homology too. All homology are supported on
\(\Hilb_{1,n}\) and we have

\begin{proposition}
  If \(\beta=cox_S\) we have
  \[\mathbb{S}_\beta=[\calO_{Z_{1,n}^S}]^{vir}.\]
\end{proposition}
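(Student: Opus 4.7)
The plan is to prove this by an explicit computation of the pullback $j^{*}(\bclC_{\cox_S})$, using the Koszul presentation of $\bclC_{\cox_S}$ established by the Corollary to Theorem~\ref{thm:main}, and matching the result with $[\calO_{Z_{1,n}^S}]^{vir}$ generator by generator.

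First I would invoke the Corollary to write
$$\bclC_{\cox_S} = \mathrm{K}^{\Wr}\bigl(\{g_{ij}\}_{(i,j)\in S'},\{F_i\}_{i\notin S}\bigr).$$
The embedding $j\colon \widetilde{\bHilb^{free}_{1,n}} \hookrightarrow \calXr_2(G_n) \times V_n$ sends a triple $(X,Y,v)$ with $X \in \frb$, $Y \in \frn$ to $(X,1,Y,v)$; since $XY$ is then strictly upper triangular, $\Wr(X,1,Y) = \Tr(XY) = 0$, confirming that the pulled-back matrix factorization lives in $\MF_B(\widetilde{\bHilb^{free}_{1,n}}, 0)$.

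Second, I would compute how each Koszul generator pulls back under $j^{*}$. The entries $g_{ij}|_{g=1}$ vanish for all $(i,j) \in S'$ as they are off-diagonal entries of the identity. For the determinantal generators, expanding $\det(M_{i+1})$ at $g=1$ along the identity columns shows that $F_i|_{g=1}$ is a specific diagonal difference of $X$, adapted to the block decomposition of $\{1,\ldots,n\}$ induced by the vanishing constraints from $S'$. In parallel, on the image of $j$ the commutators $[X,Y]_{ij}$ vanish identically for $i>j$: both $XY$ and $YX$ are strictly upper triangular when $X$ is upper triangular and $Y$ is strictly upper triangular. Thus both $\mathbb{S}_{\cox_S}$ and $[\calO_{Z_{1,n}^S}]^{vir}$ acquire the same family of zero generators, one for each index in $\tilde{S}$.

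Third, I would apply Lemma~2.2 of \cite{OR16} to match the nontrivial generators. The pulled-back $\{F_i|_{g=1}\}_{i \notin S}$ and the diagonal differences $\{x_{ii}-x_{i+1,i+1}\}_{i\notin S}$ generate the same ideal within each block of the decomposition of $\{1,\ldots,n\}\setminus S$, and are related there by a unimodular change of basis. Since Lemma~2.2 of \cite{OR16} guarantees that Koszul matrix factorizations built from equivalent regular sequences are homotopy equivalent, we obtain the desired identification $\mathbb{S}_{\cox_S} = [\calO_{Z_{1,n}^S}]^{vir}$.

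The main technical obstacle is the third step: verifying the block-wise reduction and the unimodularity of the change of basis, and in particular tracking how the $b$-companions of the Koszul matrix factorization behave at $g=1$. The block-upper-triangular structure of $g$ enforced by the constraints $\{g_{ij}=0\}_{(i,j)\in S'}$ is the key mechanism here, as it allows the global determinant $F_i = \det(M_{i+1})$ to factor compatibly with the block decomposition and match the block-local diagonal differences $x_{i+1,i+1}-x_{ii}$ up to unit factors that the Koszul formalism absorbs.
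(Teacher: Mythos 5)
Your overall strategy — start from the Koszul presentation of $\bclC_{\cox_S}$ from the Corollary to Theorem~\ref{thm:main} and restrict to $g=1$ — matches the paper's. But there is a genuine gap in steps 2 and 3: you only track the $a$-column of the Koszul matrix factorization $\mathrm{K}^{\Wr}(\{g_{ij}\},\{F_i\})$ and never compute the $b$-companions. This matters because the restriction kills the potential ($\Wr|_{g=1}=0$) and kills the $a$-entries $g_{ij}|_{g=1}=0$, so in those rows the \emph{entire} surviving differential is the contraction by $k_{ij}|_{g=1}$, where $k_{ij}$ is the $b$-companion of $g_{ij}$. Lemma~2.2 of \cite{OR16} cannot rescue you here: its hypothesis is that the $a$-column forms a regular sequence, and after restriction the $a$-entries $g_{ij}|_{g=1}=0$ no longer do, so the restricted $b$-column is not "absorbed" up to homotopy — it is genuinely new data. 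The paper's proof is essentially nothing but the computation you skipped: it constructs specific companions $k_{ij}=(\Wr_{i'j'}-\Wr_{ij})/g_{ij}$ and $h_i=(\Wr_{i'}-\Wr_i)/F_i$, and then shows $k_{ij}|_{g=1}=\partial\Wr/\partial g_{ij}|_{g=1}=[X,Y]_{ij}$ and $h_i|_{g=1}=d_i^{-1}\,\partial\Wr/\partial x_{i+1,i+1}|_{g=1}=0$. The first identity is what recovers the commutator generators of $[\calO_{Z_{1,n}^S}]^{vir}$, and the second is what makes the $F_i$-rows into honest Koszul (wedge-only) differentials rather than a mixed two-periodic differential.

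Relatedly, your claim that the commutator generators $[X,Y]_{ij}$, $(i,j)\in\tilde{S}$, ``vanish identically'' cannot be the intended reading. If all those generators were literally the zero function on $\frb\times\frn$, then $[\calO_{Z_{1,n}^S}]^{vir}$ would impose no commutativity constraints at all and its homology would not be supported on $\Hilb_{1,n}$, contradicting the sentence in the paper immediately preceding the proposition. The $[X,Y]$-generators appearing here are the nontrivial (strictly upper-triangular) entries of the commutator — which is precisely what $\partial\Wr/\partial g_{ij}|_{g=1}$ produces — and your ``same family of zero generators'' argument collapses once this is corrected. You do flag the $b$-companions as the ``main technical obstacle'' at the end, but only in the context of the diagonal-difference generators; the more fundamental issue is the $g_{ij}$-rows, where the $b$-companion carries all of the content of the pullback.
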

\begin{proof}
  We have shown that \(\bclC_{\beta}\) is Koszul matrix factorization with the differential
  \[D=\sum_{ij\in \tilde{S}}(g_{ij}\theta_{ij}+k_{ij}\frac{\partial}{\partial \theta_{ij}})+\sum_{i\notin S)}(F_i\theta_i+h_i\frac{\partial}{\partial\theta_u}),\]
  where \(\theta_{ij}\) and \(\theta_i\) are odd variables.
  The functions \(h_{ij}\) and \(k_i\) were not discussed previously since the Koszul matrix factorization \(\bclC_{\beta}\) is uniquely up to homotopy    determined  by the regular sequence
  \(\{g_{ij}\}_{ij\in\tilde{S}},\{F_i\}_{i\notin\tilde{S}}\). For concreteness lets construct these functions.

  For  that let us order elements of the sets \(\tilde{S}\) and \(\bar{S}=[1,n-1]\setminus S\). Then we define
  \[k_{ij}=(\Wr_{i'j'}-\Wr_{ij})/g_{ij},\quad \Wr_{ij}:=\Wr_{i'j'}|_{g_{ij}=0},\]
  where \(i'j'\) immediately precedes the element \(ij\) and if \(ij\) is the largest element of \(\tilde{S}\) then \(\Wr_{i'j'}=\Wr.\)

  Providing an explicit formulas for \(h_i\) is a bit harder but later we work with our matrix factorization in the neighborhood of \(g=1\) hence we can assume that
  \(d_i:=\det([g^{(i)}_{1\bullet},\dots,g^{(i)}_{i,\bullet}])\ne 0\) and let us also assume that the order of \(\bar{S}\) extends the natural order.
  Then from the first assumption we obtain that \(F_i/d_{i}=(x_{i+1,i+1}-x_{11})+R_i\) where \(R_i\) does not depend on variable \(x_{i+1,i+1}\). We define
  \[h_i=(\Wr_{i'}-\Wr_i)/F_i,\quad \Wr_{i}:=\Wr_{i'}|_{x_{i+1,i+1}=x_{11}+R_i},\]
  where \(i'\) immediately precedes \(i\) and if \(i\) is the largest element of \(\bar{S}\) then \(\Wr_i=\Wr_{kl}\) where \(kl\) is the smallest
  element of \(\tilde{S}\).

  Finally let us observe that from our formulas immediately follows that
  \[k_{ij}|_{g=1}=\frac{\partial \Wr}{\partial g_{ij}}|_{g=1}=[X,Y]_{ij},\quad F_i|_{g=1}=(x_{i+1,i+1}-x_{11}) .\]
  Moreover since \(\Wr\) has linear dependence on \(X\) we also get that
  \[h_i|_{g=1}=d_i^{-1}\frac{\partial\Wr}{\partial x_{i+1,i+1}}|_{g=1}=0.\]
\end{proof}

Let also remark that the dg-scheme from proposition 3.25 of \cite{GorskyNegutRasmussen16} seems to be closely related to the dg-scheme  defined by the complex \([\calO_{Z_{1,n}^S}]^{vir}\). We hope to explore this
relation in future. For more explicit connections with \cite{GorskyNegutRasmussen16} see the last section of this paper.

\subsection{Proof of theorem~\ref{thm:coxlinks}}
\label{sec:proof-theor}

Theorem 1.1.1.  from \cite{OR17} implies that
\[\mathbb{S}_{\beta\cdot \delta^k}=\mathbb{S}_{\beta}\otimes \mathcal{L}^{\vec{k}}.\]
If we apply this formula for \(\beta=\cox_S\) and combine it with the previous proposition we obtain the statement of the theorem.

\section{Explicit computations}
\label{sec:expl-comp}

In this subsection we explain how the above geometric computations translate into straight forward homological algebra. Discuss the subtleties of our construction of the knot homology
that is related to the \(t\)-grading and how this subtleties prevent us from using localization techniques in a naive way. All complexity of the situation could be seen in the
case \(n=2\) which we discuss at the end of the section.

\subsection{Details on $t$-grading}
\label{sec:details-t-grading}

Since \(\deg_t\Wr=2\) we need to explain how we need to explain how we assign the \(t\)-degree shifts in our matrix factorizations.
We fix convention for \(\mathbf{t}^k\cdot M\)
the shifted version of a module \(M\). For example for \(1\in \mathbf{t^k}\CC[\calXr_2]\) we have \(\deg_t(1)=k\).

Thus let us provide a clarification for the \(T_{sc}\)-equivariant  of the elements of our category \(\MF(\calXr_2,\Wr)\). An element
of \(\MF(\calXr_2,\Wr)\) is the two-periodic complex:
\[\dots\xrightarrow{d_{-1}} M_0\xrightarrow{d_0} M_1\xrightarrow{d_1}M_2\xrightarrow{d_2}\dots,\]
where \(M_i\) are free modules   \(M_i=M_{i+2}\), \(d_{i}=d_{i+2}\)  and differentials \(d_i\) preserve \(q\)-degree and shift \(t\)-degree by \(1\).
Let us call this property {\it degree one property}.
The category \(\MF_{B^2}(\calXr_2,\Wr)\) is the appropriate equivariant enhancement of the previous category.

For example the element \(\bar{\calC}_+\in \MF_{B^2}(\calXr_2,\Wr)\) is the two-periodic complex:
\[\dots\xrightarrow{d_1} R\xrightarrow{d_0}\mathbf{t} R\xrightarrow{d_1} R\xrightarrow{d_2}\mathbf{t}R\xrightarrow{d_3}\dots,\]
where \(R=\CC[\calX_2(G_2)]\) and \(d_i=(x_{11}-x_{22})g_{11}+x_{12}g_{21}\) for odd \(i\) and \(d_i=y_{12}g_{21}\) for even \(i\).

The elements  in the ring \(\CC[\calXr_2]\)  have even   \(t\)   degrees thus the only source for the elements of \(t\)-degree  in
\(
\mathbb{S}_\beta\) are shifts \(\mathbf{t}^k\) in our complexes. Since the the convolution needs to preserve the degree one property, we require that degree \(t\)
shifts in the Chevalley-Eilenberg complex are defined by the condition that the Chevalley-Eilenberg  differentials shift \(t\)-degree by \(1\).

As a final step of the construction of \(\mathbb{S}_\beta\) we apply the pull-back \(j_e^*\) to the complex \(\bar{\calC}_\beta\) where \(j_e\) is the
embedding of \(\widetilde{\Hilb_{1,n}^{free}}\) inside \(\frn\times \frb\). To construct \(j_e^*(\bar{\calC}_\beta)\) we need to choose an affine
cover  \(\widetilde{\Hilb_{1,n}^{free}}=\bigcup_i U_i\) by the  \(B\)-equivariant charts \(U_i\), then the pull-back \(j_e^*(\bar{\calC}_\beta)\) is
Cech complex \(\check{C}_{U_\bullet}(\bar{\calC}_\beta)\). Moreover, since we would like to preserve the degree one property, we shift \(t\)-degrees in
the Cech complex so that the Cech differentials are of \(t\)-degree \(1\).

Since we are working with \(T_{sc}\)-equivariant complexes of sheaves  on the Hilbert  scheme it is very tempting to use localization technique to obtain explicit
formulas for the super-polynomial for links. However the degree one property effectively prevents us from doing this, in most of the cases. We expand on this issue
in the section \ref{sec:two-strand-case} where we discuss the two-strand case but for now let us point out that formulas obtained by localization could only produce super-polynomial that has
only even powers of \(t\) because the elements \(\CC[\calXr_2]\) have even \(t\)-degree. On the other hand there many examples of the links with   the knot homology that
are not \(t\)-even.

To end the discussion on a positive note let us point out that HOMFLY-PT polynomial is well suited for localization technique, exactly because of the degree one property.
Let us denote by \(\chi_q(\mathbb{S})\) the \(\CC^*\)-equivariant Euler characteristics of an two-periodic complex \(\mathbb{S}\in D^b_{\CC^*}(\Hilb_{1,n}^{free})\) where
\(\CC^*\) acts with opposite weights on \(\frn\) and \(\frb\).

\begin{theorem}\cite{OR16}
  For any \(\beta\) we have
  \[P(L(\beta))=\sum_i\chi_q(\mathbb{S}_\beta\otimes \Lambda^i\calB).\]
\end{theorem}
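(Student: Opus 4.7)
The plan is to obtain this identity as the decategorification of the main theorem of \cite{OR16} that computes the Poincar\'e polynomial $\calP(L(\beta))=\sum_i \dim_{q,t}(\mathbb{H}^i(\beta))\,a^i$. Recall that the HOMFLY-PT polynomial $P(L)$ is the specialization at $t=-1$ of the triply-graded Khovanov-Rozansky Poincar\'e polynomial (this is the standard relationship between HOMFLY-PT and its categorification, and needs to be verified once for the version of triply graded homology used here). Hence the theorem would follow if one can show, for each $i$, that
\[
\dim_{q,t}\!\bigl(\mathbb{H}^i(\beta)\bigr)\Big|_{t=-1}\;=\;\chi_q\bigl(\mathbb{S}_\beta\otimes\Lambda^i\calB\bigr).
\]

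The key input is the degree-one property emphasized in the preceding subsection: all differentials in our two-periodic complexes shift the $t$-grading by exactly one, while the coordinate ring $\CC[\calXr_2]$ (and hence every structure sheaf in sight) lives in even $t$-degree. Consequently, substituting $t=-1$ into the $t$-graded dimension of the cohomology of any such complex turns it into the alternating sum over the $\ZZ/2$-grading of the $t$-graded dimensions of the terms of the complex, which is precisely the equivariant Euler characteristic. Concretely, first I would unfold the definition $\mathbb{H}^i(\beta)=\mathbb{H}\bigl(\CE_\frn(\mathbb{S}_\beta\otimes\Lambda^i\calB)^T\bigr)$ and identify the composite functor $\CE_\frn(-)^T$ followed by $\mathbb{H}$ with the derived pushforward to a point through $\widetilde{\Hilb^{free}_{1,n}}\to \Hilb^{free}_{1,n}\to\pt$, i.e.\ with the hypercohomology $R\Gamma(\Hilb^{free}_{1,n},-)$ computed via the \v{C}ech resolution built from the affine $B$-equivariant cover used in the construction of $j_e^*$. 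Because each of $\CE_\frn$, the invariants $(-)^T$, and the \v{C}ech differential shifts $t$ by exactly one, the degree-one property is preserved throughout, so setting $t=-1$ commutes with all these operations.

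Having identified $\mathbb{H}^i(\beta)$ with $R\Gamma(\Hilb^{free}_{1,n},\mathbb{S}_\beta\otimes\Lambda^i\calB)$, the remaining step is the classical fact that for an object $\mathcal F$ of the bounded $\CC^*$-equivariant derived category, the equivariant Euler characteristic of its hypercohomology coincides with $\chi_q(\mathcal F)$, once the latter is well-defined. Combining this with the substitution $t=-1$ yields the desired equality term-by-term in $i$, and summing with $a^i$ produces $P(L(\beta))$ on the left and $\sum_i\chi_q(\mathbb{S}_\beta\otimes\Lambda^i\calB)$ on the right.

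The main obstacle I expect is the finiteness and convergence of $\chi_q$ on the non-proper variety $\Hilb^{free}_{1,n}$: the equivariant Euler characteristic must be interpreted via localization (or as a Laurent series in $q$), and one must argue that the relevant sums converge in the appropriate completion. The saving grace, stated in the main theorem of \cite{OR16}, is that the cohomology of $\mathbb{S}_\beta$ is supported on the proper-over-$\CC^2$ subvariety $\Hilb_{1,n}\subset\Hilb^{free}_{1,n}$, so the $\CC^*$-fixed locus is well-behaved and Atiyah-Bott localization applies. A secondary technical point is to check that the \v{C}ech-type construction of $j_e^*(\bclC_\beta)$ is independent (up to quasi-isomorphism and respecting the $t$-grading) of the chosen affine cover, so that the identification of $\mathbb{H}^i(\beta)$ with $R\Gamma$ of a well-defined complex on $\Hilb^{free}_{1,n}$ is unambiguous.
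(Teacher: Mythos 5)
The theorem you are proving is cited from \cite{OR16} and the present paper gives no proof of its own, so there is nothing to compare against directly; what can be assessed is whether your reconstruction closes the logical loop, and it does not quite.

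Your reduction via the degree-one property is correct and is a genuinely useful step: because every differential (internal, Chevalley--Eilenberg, \v{C}ech) shifts $t$ by exactly one while the ring $\CC[\calXr_2]$ sits in even $t$-degree, the $t$-grading modulo $2$ is the $\ZZ/2$-homological grading, and Euler--Poincar\'e then gives
\[
\dim_{q,t}\bigl(\mathbb{H}^i(\beta)\bigr)\big|_{t=-1} \;=\; \chi_q\bigl(\mathbb{S}_\beta\otimes\Lambda^i\calB\bigr).
\]
So you have correctly shown that the right-hand side of the theorem equals $\calP(L(\beta))\big|_{t=-1}$. The gap is the other half: you then invoke ``the standard relationship $P(L)=\calP(L)\big|_{t=-1}$'' and defer its verification. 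But in the present setup that relationship \emph{is} the theorem — by your own reduction, $\calP\big|_{t=-1}=\sum_i\chi_q(\mathbb{S}_\beta\otimes\Lambda^i\calB)a^i$, so asserting $P=\calP\big|_{t=-1}$ is asserting the conclusion. The only way you propose to discharge this is to identify the \cite{OR16} homology with Khovanov--Rozansky homology, but the paper's own footnote explicitly labels that identification as merely ``expected,'' not established, so it cannot be used as a crutch. The argument as written is therefore circular.

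What is missing is an independent characterization of $\sum_i\chi_q(\mathbb{S}_\beta\otimes\Lambda^i\calB)a^i$ as the HOMFLY-PT polynomial. The natural route (and presumably the one in \cite{OR16}) is to verify directly at the level of the two-periodic complexes and Euler characteristics: a local skein-triangle relating $\bclC_+$, $\bclC_-$, and $\bar{\mathds{1}}$ that, after taking $\chi_q$, yields the HOMFLY-PT skein relation; invariance of $\chi_q$ under the two Markov moves (the second move is where the $\Lambda^i\calB$-twist and the $a$-variable enter); and the normalization on the unknot ($n=1$). This bypasses the homology theory entirely and avoids the circularity. The convergence concerns you raise (support on $\Hilb_{1,n}$, Atiyah--Bott localization for $\chi_q$) are relevant and correctly identified, but they are secondary to the missing skein-theoretic input.
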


\subsection{Conjectures for Coxeter links}
\label{sec:conj-coxet-links}
Let \(j_\Delta:\frb\times \frn\) be the \(B\)-equivariant embedding inside \(\calXr_2\) and let \(\frb_S\subset \frb\) be the subspace defined by equations
\(x_{ii}=x_{i+1,i+1}\) for \(i\notin S\).
The results of the previous section imply that we have the homotopy of the
two-periodic complexes:
\begin{equation}\label{eq:Kcox}
  j^*_\Delta(\bar{\calC}_{\cox_S})\sim K_{\cox_S}\otimes \calO_{\frb_S\times\frn}, \quad K_{cox_S}:=\bigotimes_{ij\in \tilde{S}}[R\xrightarrow{[X,Y]_{ij}} \mathbf{t}\cdot R],
\end{equation}
where \(\tilde{S}=\{j-i>1\}\cup S\) and \(R=\CC[\calXr_2]\).
The tensor product above is a restriction of the complex to the subvariety \(\frb_S\times\frn\) and to simplify notations we abbreviate the restriction  by \(K_{\cox_S}\).

Let us cover \(\widetilde{\Hilb_{1,n}^{free}}\) by the affine charts \(U_i\) then we have the following expression for the homology:
\[\mathbb{H}^m(\cox_S\cdot \delta^{\vec{k}})=\CE_{\frn}\left(\check{C}_{U_\bullet}(K_{\cox_S}\otimes\Lambda^m\calB \otimes\chi_{\vec{k}})\right)^{T},\]
where \(\chi_{\vec{k}}\) is a notation for the character of the torus \(T\).

We simplify slightly the above formula  by eliminating the Chevalley-Eilenberg complex with the following trick.
In the next section we describe affine  subspaces \(\mathbb{A}_\bullet\subset\widetilde{\Hilb^{free}_{1,n}}\) such that affine varieties \(B\mathbb{A}_\bullet\) form an affine cover
of \(\widetilde{\Hilb_{1,n}^{free}}\) and \(B\)-stabilizer is trivial. Hence if we choose \(B\mathbb{A}_\bullet\) as our Cech cover then because of the triviality of the stabilizers
the Chevalley-Eilenberg complex is acyclic on every chart and extracting its zeroth homology on the chart \(B\mathbb{A}_S\) corresponds to the restriction on the affine subvariety
\(T\mathbb{A}_S\) which we denote by \(\mathbb{T}_S\). Thus we have the following least geometry rich statement:

\begin{corollary} For any \(\vec{k}\) and \(S\) we have:
  \[\mathbb{H}^m(\cox_S\cdot \delta^{\vec{k}})=\left(\check{C}_{\mathbb{A}_\bullet}(K_{\cox_S})\otimes \Lambda^m\calB\otimes \chi_{\vec{k}}\right)^T.\]
  \end{corollary}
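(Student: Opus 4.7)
The plan is to specialize the formula
\[
\mathbb{H}^m(\cox_S\cdot \delta^{\vec{k}})=\CE_{\frn}\left(\check{C}_{U_\bullet}(K_{\cox_S}\otimes\Lambda^m\calB \otimes\chi_{\vec{k}})\right)^{T}
\]
from the preceding subsection to the specific Cech cover $U_i:=B\mathbb{A}_i$ constructed in the next section, and to show that on this cover the derived $B$-invariants $\CE_{\frn}(-)^{T}$ collapse to an ordinary restriction to the slices $\mathbb{A}_{\bullet}$.

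First I would note that $\CE_{\frn}$ commutes with the formation of the Cech complex: both are obtained by tensoring with a fixed bounded complex (the Chevalley--Eilenberg resolution, respectively the alternating direct sum of restriction functors), and such tensorings commute up to signs. It therefore suffices to analyze $\CE_{\frn}(\calF|_{B\mathbb{A}_I})^{T}$ on each Cech intersection $B\mathbb{A}_I:=\bigcap_{i\in I}B\mathbb{A}_i$, where $\calF:=K_{\cox_S}\otimes\Lambda^m\calB\otimes\chi_{\vec{k}}$.

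Second I would invoke the local statement flagged in the paragraph just above the corollary: since the $B$-stabilizer of every point of $\mathbb{A}_I$ is trivial, the orbit map $B\times \mathbb{A}_I\to B\mathbb{A}_I$ is an isomorphism, and $\CC[B\mathbb{A}_I]\simeq\CC[N]\otimes\CC[T]\otimes\CC[\mathbb{A}_I]$ with $\frn$ acting only by left translation on the $\CC[N]$-factor. Because $\CC[N]$ is coinduced from the trivial representation, the higher derived invariants of $\CC[N]$ vanish and $\CC[N]^{\frn}=\CC$, so $\CE_{\frn}(\calF|_{B\mathbb{A}_I})$ is concentrated in cohomological degree zero and equals the restriction $\calF|_{T\mathbb{A}_I}$. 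Taking the outer $T$-invariants then amounts to selecting the zero-weight part with respect to the residual free $T$-action on $T\mathbb{A}_I\simeq T\times\mathbb{A}_I$, which, after absorbing the $T$-characters carried by $\Lambda^m\calB\otimes\chi_{\vec{k}}$, is precisely the restriction of $\calF$ to the slice $\mathbb{A}_I$ with the single $(-)^T$ pulled outside.

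Assembling the two reductions gives exactly the right-hand side of the corollary, with the Cech complex now indexed by the slices $\mathbb{A}_{\bullet}$, the functor $\CE_{\frn}$ eliminated, and a single outer $(-)^{T}$. The main obstacle is the geometric input from the next section: the existence of the affine subspaces $\mathbb{A}_{\bullet}$ whose $B$-translates form an affine cover of $\widetilde{\Hilb_{1,n}^{free}}$ and on which the $B$-stabilizers are trivial. Once this input is granted, the cohomological argument is essentially the standard fact that derived $B$-equivariant cohomology on a free $B$-orbit coincides with ordinary cohomology on any slice, together with the trivial bookkeeping of $T$-weights.
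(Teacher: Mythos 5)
Your argument is correct and follows exactly the route the paper sketches: choose the \'Cech cover $B\mathbb{A}_\bullet$, use triviality of the $B$-stabilizers to identify each chart with $N\times T\times\mathbb{A}_I$, and conclude that $\CE_\frn$ is acyclic in positive degrees with $\mathrm{H}^0$ equal to the restriction to the slice $T\mathbb{A}_I$, after which $(-)^T$ gives the stated formula. The paper states this only as a brief remark in the paragraph preceding the corollary; your write-up supplies the same vanishing argument (coinduced $\CC[N]$) in more detail.
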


  Since the line bundle \(\calL^{\vec{k}}\) is very ample for sufficiently positive \(\vec{k}\) for such \(\vec{k}\) the Cech complex becomes acyclic a we have
\begin{corollary}
    For sufficiently positive \(\vec{k}\) we have:
  \[\mathbb{H}^i(\cox_S\cdot \delta^{\vec{k}})=\left(H_{\check{C}}^0(K_{\cox_S}\otimes \Lambda^i\calB\otimes \chi_{\vec{k}})\right)^{T}.\]
\end{corollary}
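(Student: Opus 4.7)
The plan is to deduce this corollary from the previous one by a Serre vanishing argument. The previous corollary expresses $\mathbb{H}^i(\cox_S\cdot\delta^{\vec{k}})$ as the $T$-invariant part of the total complex of $\check{C}_{\mathbb{A}_\bullet}(K_{\cox_S})\otimes\Lambda^i\calB\otimes\chi_{\vec{k}}$, so the content of the statement is that for $\vec{k}$ sufficiently positive the Cech differential becomes acyclic in positive Cech degrees, leaving only the $H^0_{\check{C}}$ column.

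First I would invoke the iterated projective-bundle (tower) description of $\widetilde{\Hilb^{free}_{1,n}}$ that is promised in section~\ref{sec:local-expl-form}, under which $\calL^{\vec{k}}$ is identified with a tensor product of the tautological line bundles pulled back from the successive projective-bundle stages. For $\vec{k}$ componentwise large enough, $\calL^{\vec{k}}$ becomes relatively very ample on every stage and hence ample on the total space. Second, the cover $\{B\mathbb{A}_\bullet\}$ promised in section~\ref{sec:local-expl-form} is a finite cover by affine opens, so Cech cohomology on this cover computes the derived global sections of any bounded complex of coherent sheaves (by Leray together with vanishing of higher cohomology of coherent sheaves on affine schemes).

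The main step is then standard: $K_{\cox_S}\otimes\Lambda^i\calB$ is a bounded complex of locally free sheaves (Koszul in one factor, an exterior power of the tautological bundle in the other), and applying Serre vanishing stage-by-stage up the tower together with the Leray spectral sequence, the higher sheaf cohomology of $K_{\cox_S}\otimes\Lambda^i\calB\otimes\calL^{\vec{k}}$ vanishes for $\vec{k}$ in a sufficiently positive cone. A standard double-complex spectral sequence argument then collapses the Cech bicomplex onto its $H^0_{\check{C}}$ column, and since the $T$-action is preserved by each step, passing to $T$-invariants commutes with this collapse.

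The main obstacle is to make the phrase \emph{sufficiently positive} precise: one must identify the cone of $\vec{k}\in\ZZ^{n-1}$ for which $\calL^{\vec{k}}$ is ample enough on every stage of the tower simultaneously to kill higher cohomology of the specific bounded complex $K_{\cox_S}\otimes\Lambda^i\calB$. As the discussion preceding Theorem~\ref{thm:localization} indicates, an explicit such cone requires a more detailed study of $\Hilb^{free}_{1,n}$ and is postponed to a forthcoming paper; for the qualitative statement of the corollary the usual inductive Serre vanishing along the tower suffices.
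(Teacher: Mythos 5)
Your proposal is correct and takes essentially the same route as the paper: the paper justifies the corollary with a single sentence immediately preceding it, citing very ampleness of $\calL^{\vec{k}}$ for sufficiently positive $\vec{k}$ to conclude that the Cech complex becomes acyclic in positive degrees, which is precisely the Serre vanishing you invoke. Your write-up merely expands that one-line observation into the standard argument (finite affine cover computes derived global sections, iterated projective-bundle structure of $\Hilb_{1,n}^{free}$ gives the ampleness, Leray plus a double-complex spectral sequence collapses the Cech bicomplex, and the $T$-action is compatible throughout), which is the intended content.
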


In the last formula  we eliminated all possible sources of odd \(t\)-degree shifts with exception of the shifts inside the complex \(K_{\cox_S}\). Thus as it is we still can not apply
localization methods to extract an explicit formulas. So let correct the complex \(K_{\cox_S}\) to make it comply with localization formula:
\begin{equation}\label{eq:coxSeven}
K_{\cox_S}^{even}=\bigotimes_{ij\in\tilde{S}}[R\xrightarrow{[X,Y]_{ij}} \mathbf{t}^2R],
\end{equation}
and let us introduce computationally friendly 'invariant':
\[\calP^{even}(L(\cox_S\cdot\delta^{\vec{k}}))=\sum_{i,j} (-1)^j\dim_{q,t}\left(H^{j}(\check{C}_{\mathbb{A}_\bullet}(K^{even}_{\cox_S}\otimes\Lambda^i\calB\otimes\chi_{\vec{k}}))\right)a^i.\]
This invariant is an equivariant Euler characteristic of the complex and in the next section we explain how one can obtain explicit localization formulas for this Euler characteristic
with localization technique.

Several recent preprints \cite{Ho17,M17} suggest that for at least for sufficiently positive  \(\vec{k}\) the sum above will non-zero terms only for \(j=0\).
Other words it is reasonable to pose:
\begin{conjecture} For sufficiently positive \(\vec{k}\) we have:
  \[\calP^{even}(L(\cox_S\cdot\delta^{\vec{k}}))=\calP(L(\cox_S\cdot\delta^{\vec{k}})).\]
\end{conjecture}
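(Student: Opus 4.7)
\medskip

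\noindent\textbf{Proof plan for the conjecture.}

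The plan is to establish that for sufficiently positive $\vec{k}$, both $\calP$ and $\calP^{even}$ reduce to the $q,t,a$-graded dimensions of the same space of global sections, so that the Euler-characteristic version agrees with the true Poincar\'e polynomial. First, I would analyze $K_{\cox_S}^{even}$ as a genuine $\ZZ$-graded complex on $\widetilde{\Hilb_{1,n}^{free}}\cap (\frb_S\times\frn)$. The differentials are the commutator entries $\{[X,Y]_{ij}\}_{ij\in\tilde S}$; I would verify that on the free Hilbert locus these form a regular sequence (this follows from the smoothness of $\Hilb_{1,n}$ and a codimension count, using the iterated projective-bundle structure of $\Hilb_{1,n}^{free}$ described in Section~\ref{sec:local-expl-form}). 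This identifies $K_{\cox_S}^{even}$ with a Koszul resolution whose only cohomology sits in degree $0$ and equals $[\calO_{Z_{1,n}^S}]^{vir}$ in the honest $\ZZ$-graded sense, now supported on $\Hilb_{1,n}^S$.

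Second, I would apply Serre vanishing. On each chart $B\mathbb{A}_i$ of the Cech cover, the $B$-stabilizers are trivial, so the Chevalley--Eilenberg complex is acyclic and we are left with the honest affine Cech complex on the $T$-quotients $\mathbb{T}_S$. For $\vec{k}$ sufficiently positive, $\calL^{\vec k}$ is very ample on $\Hilb_{1,n}^{free}$, so the twisted coherent sheaf $H^0(K_{\cox_S}^{even})\otimes\Lambda^i\calB\otimes\calL^{\vec k}$ has vanishing higher Cech cohomology. The alternating sum defining $\calP^{even}$ therefore collapses onto $H^0$, and
\[
\calP^{even}(L(\cox_S\cdot\delta^{\vec k}))=\sum_{i}\dim_{q,t}\bigl(H^0_{\check C}(K_{\cox_S}^{even}\otimes\Lambda^i\calB\otimes\chi_{\vec k}))^T\bigr)a^i.
\]

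Third, I would show that $\calP$ computes the same object. Using the same cover $\{B\mathbb{A}_\bullet\}$, the corollary in Section~\ref{sec:conj-coxet-links} reduces $\calP$ to the $T$-invariants of $\check C_{\mathbb{A}_\bullet}(K_{\cox_S}\otimes\Lambda^i\calB\otimes\chi_{\vec k})$. The only formal difference between $K_{\cox_S}$ and $K_{\cox_S}^{even}$ is the replacement $\mathbf{t}\leadsto \mathbf{t}^2$ on each Koszul factor, i.e.\ whether the Koszul differential is treated as having $t$-degree $1$ (matrix-factorization convention) or $2$ (honest bounded-complex convention). Provided the genuine $\ZZ$-graded Koszul complex has cohomology concentrated in degree $0$ (Step 1) \emph{and} the Cech complex is acyclic in positive degree (Step 2), the spectral sequence relating the matrix factorization $K_{\cox_S}$ to its $\mathbf{t}^2$-shifted cousin degenerates: all differentials with odd $t$-shift become contractible, and the resulting graded pieces match on the nose.

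The main obstacle is the last step, namely upgrading the vanishing of cohomology on the $\mathbf{t}^2$-side (a standard consequence of Serre vanishing and regularity of the commutator sequence) to the concentration statement on the $\mathbf{t}$-side, where we must rule out interference between the Koszul differentials and the two-periodic structure. I expect this to require a quantitative estimate of how positive $\vec{k}$ must be, phrased in terms of the weights of $\calL$ on the $T$-fixed points of $\Hilb_{1,n}^{free}$, and to parallel the combinatorial cancellations appearing in the Hogancamp--Mellit treatment~\cite{Ho17,M17} of the torus-knot case. A successful implementation would give a geometric reproof of their result along the way.
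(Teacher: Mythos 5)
The statement you are proving is a \emph{conjecture} in the paper, and the authors prove it only for $S=\emptyset$, i.e.\ for $\beta=\cox$ (Theorem~\ref{thm:localization}, via the vanishing lemma in Section~\ref{sec:vanish}); they explicitly write that extending the argument to general $\cox_S$ ``will appear in the future publications.'' Your proposal aims at the general $\cox_S$ case, so it goes beyond what the paper establishes — and the place where it goes beyond is exactly where it breaks.

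The critical gap is your Step~1. You claim that the commutator entries $\{[X,Y]_{ij}\}_{ij\in\tilde S}$ together with the diagonal differences $\{x_{ii}-x_{i+1,i+1}\}_{i\notin S}$ form a regular sequence on $\widetilde{\Hilb_{1,n}^{free}}\cap(\frb_S\times\frn)$, so that the Koszul complex $K^{even}_{\cox_S}$ has cohomology only in degree~$0$. This is false in general: in Section~\ref{sec:koszul-complex-link} the paper explicitly states that the zeroth homology of $[\calO_{Z^S_{1,n}}]^{vir}$ is the structure sheaf of $Z^S_{1,n}$ \emph{but the complex has higher homology too}, with all homology supported on $\Hilb_{1,n}$. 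That is, $Z^S_{1,n}$ is not a complete intersection inside the free Hilbert scheme, and the commutator sequence is not regular there. The ``smoothness of $\Hilb_{1,n}$ and a codimension count'' you invoke does not see the non-reducedness and excess dimension phenomena that produce this higher homology. Once Step~1 fails, the identification of $\calP^{even}$ with a single $H^0$ in Step~2 also fails, because the hypercohomology spectral sequence $H^p(\Hilb,\mathcal H^q(K^{even}_{\cox_S}))\Rightarrow\mathbb H^{p+q}$ picks up the nonzero $\mathcal H^q$ for $q>0$ even when the Cech cohomology in positive degree is killed by positivity of $\vec k$.

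What the paper actually does for $\cox$ is more delicate: it reduces to the punctual locus $\Hilb_{1,n}^{free}(0)$ (where $X$ is nilpotent), uses ampleness of $\calO(1)$ to kill Cech cohomology, and then verifies regularity of the Koszul sequence \emph{on a particular chart} — the Jordan-block chart $S_x^i=\{i{+}1,\dots,n\}$, $S_y^i=\emptyset$, where $[X,Y]=0$ forces $Y$ to be a polynomial in $J$ and the commuting variety is smooth of the right codimension. This is genuinely a chart-by-chart statement, not a global regularity claim. A correct extension to $\cox_S$ would need to identify, for each $S$, analogous charts on which the restricted Koszul sequence is regular, and then argue that for sufficiently positive $\vec k$ the contributions from the other charts do not interfere; the paper leaves this to future work. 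Your Step~3, by contrast, is more elaborate than necessary: once the total hypercohomology is concentrated in degree~$0$, $\calP=\calP^{even}$ follows immediately (in degree~$0$ none of the $\mathbf{t}$ shifts on Koszul or Cech factors act), and no spectral-sequence degeneration argument or quantitative estimate on $\vec k$ beyond Serre vanishing is required.
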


As we will see in the next subsection this conjecture is false without assumption of the positivity. It is false for very negative \(\vec{k}\).

In the last section we discuss  a stronger and more geometric version of the conjecture for \(\beta=\cox\):
\begin{conjecture}
  The higher degree hyper-cohomology of the complex \(\CE_\frn(\mathbb{S}_{\beta\cdot\delta^{\vec{k}}}\otimes \Lambda^\bullet\mathcal{B})^T\) vanish if the vector \(\vec{k}\) is sufficiently positive.
\end{conjecture}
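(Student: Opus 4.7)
The plan is to combine the Koszul presentation $\mathbb{S}_{\cox\cdot\delta^{\vec{k}}} \simeq [\calO_{Z_{1,n}}]^{vir}\otimes\calL^{\vec{k}}$ furnished by Theorem~\ref{thm:coxlinks} with the description of $\Hilb_{1,n}^{free}$ as an iterated projective-bundle tower, and to deduce the desired vanishing from Serre-type positivity on each step of the tower. First I would fix the affine cover $\{B\mathbb{A}_\bullet\}$ of $\widetilde{\Hilb_{1,n}^{free}}$ constructed in Section~\ref{sec:local-expl-form}; since the $B$-action is free on each $\mathbb{A}_\bullet$, the Chevalley--Eilenberg and $T$-invariance operations collapse on each chart to restriction along $\mathbb{T}_S = T\cdot\mathbb{A}_S$, so the full hyper-cohomology is computed by the \v{C}ech double complex of $[\calO_{Z_{1,n}}]^{vir}\otimes \calL^{\vec{k}} \otimes \Lambda^i\calB$ for this cover on $\Hilb_{1,n}^{free}$.

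Next I would exploit the tower structure $\Hilb_{1,n}^{free} \to \Hilb_{1,n-1}^{free} \to \cdots \to \pt$ with successive projective-space fibers $\PP^{n-1},\PP^{n-2},\ldots,\PP^{1}$ that is alluded to in the introduction and made explicit in Section~\ref{sec:local-expl-form}. Under this presentation the line bundles $\calL_i$ become positive generators of the relative Picard groups of the tower, and $\calB$ carries a filtration whose graded pieces are expressible in terms of the $\calL_i$'s. Successive Leray spectral sequences along the tower projections unwind the cohomology of $\calL^{\vec{k}} \otimes \Lambda^i\calB$ into an iterated product of pieces of the form $H^*(\PP^{n-j}, \calO(k_j)\otimes E_j)$, where $E_j$ has degree bounded independently of $\vec{k}$; the standard Serre vanishing on projective space then kills all higher $H^p$ once every $k_j$ exceeds a uniform threshold depending only on $n$ and on the weights of $\Lambda^i\calB$.

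The third step is to incorporate the Koszul differentials of $[\calO_{Z_{1,n}}]^{vir}$. Each module appearing in the Koszul complex is a bounded twist of $\calL^{\vec{k}}$, so the previous step ensures that higher \v{C}ech cohomology vanishes termwise in the Koszul direction, uniformly in the Koszul degree, provided $\vec{k}$ is sufficiently positive. Consequently the \v{C}ech-to-hypercohomology spectral sequence collapses onto the \v{C}ech-degree-zero row, which is itself a bounded complex concentrated in non-positive Koszul degrees; its positive-degree total cohomology therefore vanishes, giving the desired statement.

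The hard part is making ``sufficiently positive'' both uniform and effective. The Koszul generators $[X,Y]_{ij}$ and $x_{ii}-x_{i+1,i+1}$ carry non-trivial $T_{sc}$-characters, and a single generator can contribute across several steps of the tower simultaneously, so the positivity cone for $\vec{k}$ is cut out by a finite system of weight inequalities coupling the Koszul shifts, the weights of $\Lambda^i\calB$, and the relative $\calO(1)$'s of the successive projective bundles. Producing this explicit cone requires the detailed local coordinates on $\mathbb{A}_\bullet$ constructed in Section~\ref{sec:local-expl-form}; once the cone has been identified, the tower-wise Serre vanishing induction goes through formally, and the argument closes.
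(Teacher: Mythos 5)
The paper's own proof of this statement (the Vanishing Lemma of the last section) takes a substantially different and shorter route than what you propose: it first observes that the support of $\mathbb{S}_{\beta}$ lies on the punctual free Hilbert scheme $\Hilb_{1,n}^{free}(0)$ (where $X$ is nilpotent) times a line, invokes ampleness of $\calO(1)=\bigotimes_i\calL_i$ to reduce the hyper-cohomology to the complex of global sections, and then restricts to the Jordan-block chart $\mathbb{A}_S$ with $S_x^i=\{i+1,\dots,n\}$, $S_y^i=\emptyset$, where the Koszul differentials $[X,Y]_{ij}$ cut out a smooth variety of the expected codimension, so that $K_{\cox}$ is a genuine resolution there. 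That single-chart computation is what makes the argument close. Your proposal tries instead to push Serre vanishing through the whole projective-bundle tower, which is a genuinely different strategy, but it does not use the observation that the complex is supported punctually --- and that observation is precisely what sidesteps the hard part you yourself flag at the end, namely controlling the positivity cone across all stages of the tower simultaneously. Once you know the support is on the punctual locus, the whole question is localized to a single affine chart and there is no uniformity problem left to solve.

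There are also two more concrete gaps. First, the projective-bundle tower structure of $\Hilb_{1,n}^{free}$ is only alluded to in the introduction; Section~\ref{sec:local-expl-form} constructs the affine charts $\mathbb{A}_S$ and the torus-weight data, not a fibration $\Hilb_{1,n}^{free}\to\Hilb_{1,n-1}^{free}$ with an identified relative $\calO(1)$, and you would need to supply that identification and show that the $\calL_i$'s and the weights of $\calB$ fit compatibly into it before the Leray iteration can even start. Second, your third step asserts that after collapsing the \v{C}ech direction one is left with a complex concentrated in non-positive Koszul degrees whose positive-degree total cohomology automatically vanishes; this gives the literal $l>0$ statement but quietly leaves aside the fact (needed for the application in Theorem~\ref{thm:localization}) that $K_{\cox}$ is not a resolution globally. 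The paper handles this by the Jordan-chart regularity argument; your outline has no analogue of that step.
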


\subsection{Two strand case}
\label{sec:two-strand-case}
In this subsection we compute homology for the links obtained by closing braids on two strands. Thus illustrate our computational technique and also one can compare
computations in section 5 of \cite{GorskyNegutRasmussen16}. The results of computation in \cite{GorskyNegutRasmussen16} and in our paper match and that provides yet another evidence for existence of a close relation between
the theory outlined in \cite{GorskyNegutRasmussen16} and our.

First let us describe the computation of the homology of \(T_{2,2n+1}=L(\sigma_1^{2n+1})\). Since \(\sigma_1=\cox_S\), \(S=\emptyset\) in this case \(\frb_S=\frn\oplus \CC\)
let us fix coordinates on it \(\frb_S=\{x_{12}E_{12}+x(E_{11}+E_{22})\}\).  Respectively we fix notation \(R=\CC[x,x_{12},y_{12}]\) for the coordinate ring on \(\frb_S\times\frn\).

The complex \(K_{\cox_S}\) in this case is just \(R\). Moreover, intersection \(\widetilde{\Hilb_{1,n}^{free}}\cap \frb_S\times \frn\) is covered with two charts \(\mathbb{A}_1=\{x_{12}\ne 0\}\),
\(\mathbb{A}_2=\{y_{12}\ne 0\}\). That is the homology \(\mathbb{H}^k(T_{2,2n+1})\) are equal to the homology of the complex:
\[ \left((R_{x_{12}}\oplus R_{y_{12}})\otimes \chi^{n-k}\rightarrow \mathbf{t}R_{x_{12}y_{12}}\otimes \chi^{n-k}\right)^{T},\]
where \(\chi:T\rightarrow\CC^*\) is the character \((\lambda,\mu)\mapsto \lambda\).

Thus the knot homology of \(T_{2,2n+1}\) is the sum of triply graded vector spaces is the tensor product of \(\CC[x]\) and the space:
\[H^0(\mathbb{P}^1,\calO(n))\oplus \mathbf{t}H^1(\mathbb{P}^1,\calO(n))\oplus\mathbf{a} H^0(\mathbb{P}^1,\mathcal{O}(n-1))\oplus\mathbf{at}H^1(\mathbb{P}^1,\calO(n-1))\]
shifted by \((\mathbf{a/t})^n\).
We can compute the super-polynomial we just need the formula  for the dimensions of the homology of the line bundles:
\[\dim_{q,t}(H^0(\mathbb{P}^1,\calO(n)))=\sum_{i=0}^n q^{2i}(t/q)^{2n-2i},\quad \dim_{q,t}(H^1(\mathbb{P}^1,\calO(n)))=\sum^{-n-2}_{i=0}(q)^{2i}(t/q)^{-2n-2i-4}.\]

The case of the torus link \(T_{2,2n}\) is more involved. Since \(S=\{1\}\) in this case \(\frb_S=\frb\). Let us denote by \(R\) the ring of functions on \(\frb\times\frn\):
\(R=\CC[x_{+},x_{-},x_{12},y_{12}]\) where \(x_+=x_{11}+x_{22}\), \(x_-=x_{11}-x_{22}\). In these notations we have
\[K_{\cox_S}=[R\xrightarrow{y_{12}x_-}\mathbf{t}R].\]

The Cech cover in this case is basically the same as in the previous case: \(\mathbb{A}_1=\{x_{12}\ne 0\}\) and \(\mathbb{A}_2=\{y_{12}\ne 0\}\). Thus the homology of the
torus link \(T_{2,2n}\) is the sum of vector spaces \(\mathbb{H}^0\oplus \mathbf{a}\mathbb{H}^1\) shifted by \((\mathbf{a/t})^n\) where
\(\mathbb{H}^i\) is homology of the complex:
\begin{equation}\label{eq:2str}
\begin{tikzcd}
\mathbf{t}  R_{x_{12}y_{12}}[n-i]\arrow[r,"y_{12}x_-"] &\mathbf{t}^2R_{x_{12}y_{12}}[n-i+1]\\
R_{y_{12}}[n-i]\oplus R_{x_{12}}[n-i]\arrow[r,"y_{12}x_-"]\arrow[u]&\mathbf{t}R_{y_{12}}[n-i-1]\oplus\mathbf{t} R_{x_{12}}[n-i-1]\arrow[u]
\end{tikzcd},
\end{equation}
where \(R[m]\) stands for the degree \(m\) part of the ring \(R\) with degrees of the generators are
\[\deg x_{12}=\deg y_{12}=1,\quad \deg_{x_-}=\deg_{x_+}=0.\]

The complex above is the tensor product of \(\CC[x_+]\) and the complex with \(x_+\) set to zero.
Thus to make our computations easier we work modulo ideal \((x_+)\), \(R'=R/(x_+)\)

 Geometrically the homology of the last complex could interpreted as homology of line bundle \(\calO(n-i)\) on the union of an projective line and an affine line that intersect
 transversally at one point. But for illustration of our methods we proceed algebraically.

 First let us observe that the horizontal differential is injective and we can contract the complex in this direction. For that we need to describe the cokernel of the map.
 Since  we have:
 \[R'_{y_{12}}[m]=\CC[(\frac{x_{12}}{y_{12}}),x_-] y_{12}^m,\]
 \[R'_{x_{12}}[m]=\CC[(\frac{y_{12}}{x_{12}}),x_-]x_{12}^m.\]
  the cokernel of the map on the on \(R'_{y_{12}}[m]\) is \(\CC[\frac{x_{12}}{y_{12}}]y_{12}^m\) and the cokernel on
 \(R'_{x_{12}}[m]\) is the sum
 \[\CC[(\frac{y_{12}}{x_{12}})]x_{12}^m\oplus x_-\CC[x_-]x_{12}^m.\]
 Finally \(R'_{x_{12}y_{12}}[m]=\CC[x_-,(\frac{x_{12}}{y_{12}})^{\pm 1}]x_{12}^m\) and the cokernel of the map on this space is
\(\CC[(\frac{x_{12}}{y_{12}})^{\pm 1}]y_{12}^m\)
 There is the induced Cech differential \(d_C\) on the cokernels
 \[\CC[\frac{x_{12}}{y_{12}}]y_{12}^m\oplus \CC[(\frac{y_{12}}{x_{12}})]x_{12}^m\oplus x_-\CC[x_-]x_{12}^m\xrightarrow{d_C}
   \CC[(\frac{x_{12}}{y_{12}})^{\pm 1}]y_{12}^m.\]
If \(m\ge 0\) this induced differential is surjective and the kernel spanned by
 \[\langle y_{12}^m, x_{12}y_{12}^{m-1},\dots,x_{12}^m\rangle\oplus x_-\CC[x_-]x_{12}^m.\]
 Let us denote the last vector space by \(V_m\).

 On other hand if \(m\) is negative then kernel and cokernel of the induced differentials are the vector spaces:
 \[x_-\CC[x_-]x_{12}^m,\quad \langle y_{12}^{-m-2},y_{12}^{-m-1}x_{12},\dots,x_{12}^{-m-2}\rangle.\]
 Let us denote the first vector space \(V'_m\) and \(V''_m\).

Thus for \(n\ge 0\)
  the knot homology of \(T_{2,2n}\) is triply graded vector space:
 \[\mathbf{a/t}^n\cdot(\mathbf{t}V_n\oplus\mathbf{at}V_{n-1})\otimes\CC[x_+],\]
 and for negative \(n\) the knot homology of \(T_{2,2n}\) is the vector space:
 \[\mathbf{a/t}^n\cdot(\mathbf{t} V'_n\oplus\mathbf{t}^2V''_n\oplus\mathbf{at}V'_{n-1}\oplus\mathbf{at^2}V''_{n-1})\otimes \CC[x_+].\]
 To convert the last formula  into super-polynomial we only need to remember: \[\deg_{q,t}x_{12}=\deg_{q,t}x_-=\deg_{q,t}x_+=q^2,\quad \deg_{q,t}y_{12}=t^2/q^2.\]

 We would like to point out that case of the links \(T_{2,2n}\) is more complex than the case of the knots \(T_{2,2n+1}\). For example in case of knots elements of
 knot homology of \(T_{2,2n+1}\) for any \(n\) have the same parity of \(t\)-degree. It is no longer true for links, the homology of \(T_{2,2n}\) for negative \(n\)
 contains elements of odd and even \(t\)-degree. Thus it seems to be very unlikely, there is some localization type formula that produces the super-polynomial
 of \(T_{2,2n}\) for (very) negative \(n\).

\section{Localization and explicit formulas for homology}
\label{sec:local-expl-form}

In this section we present an explicit formulas for the graded dimension of the homology of the Coxeter links under assumption that the corresponding braid is sufficiently positive.
First we discuss the geometry of \(\Hilb^{free}_{1,n}\) since that is the space where we perform our localization computation.

\subsection{Local charts}
\label{sec:local-charts}

It is shown in \cite{OR16} that the free Hilbert  scheme  \(\Hilb_{1,n}^{free}\) could be covered with affine charts. In this subsection we remind this construction.
First, we describe the combinatorial data used for labeling of the charts.

Let us denote by \(NS_n\) the set of the nested pairs of sets with the following properties. An element \(\bfS\in NS_n\) is a pair of nested sets:
\[\bfS_x^1\supset \bfS_x^2\supset\dots\supset \bfS_x^{n-1}\supset \bfS_x^n=\emptyset,\]
\[\bfS_y^1\supset \bfS_y^2\supset\dots\supset \bfS_y^{n-1}\supset \bfS_y^n=\emptyset,\]
such that
\[ \bfS_x^k,\bfS_y^k\subset \{ k+1,\dots,n\},\quad |\bfS_x^i|+|\bfS_y^i|=n-i.\]

Let us define the sets  of pivots of \(\bfS\) as sets \(P_x(\bfS),P_y(\bfS)\) consisting of the pairs
\[ P_x(\bfS)=\{(ij)|j\in \bfS_x^i\setminus \bfS_x^{i+1}\},\quad P_y(\bfS)=\{(ij)|j\in \bfS_y^i\setminus \bfS_y^{i+1}\}.\]
To an element \(\bfS\in NS_n\) we attach the following affine space \(\mathbb{A}_\bfS\subset \frn\times \frn\):
\[(X,Y)\in\mathbb{A}_\bfS,\mbox{ if } x_{ij}=1, ij\in P_x(\bfS)\quad y_{ij}=1, ij\in P_y(\bfS) \mbox{ and }\]
\[x_{i,j}=0,\mbox{ if } j\in \bfS_x^i,\quad y_{i,j}=0,\mbox{ if } j\in \bfS_y^i.\]
For a given \(\bfS\) we denote by \(N_x(\bfS)\) and \(N_y(\bfS)\) the indices \((ij)\) such that \(x_{ij}\) respectively \(y_{ij}\) such that the corresponding entries are not constant on
\(\mathbb{A}_\bfS\). From the construction we see that \(|N(\bfS)|=n(n-1)/2.\)

Let us denote by \(\frh\) the subspace of the diagonal matrices inside \(\frb\). The sum \(\frh+\mathbb{A}_\bfS\) is affine subspace inside \(\frb\times\frn\) and we show in \cite{OR16}:
\begin{proposition} The space \(\widetilde{\Hilb_{1,n}^{free}}\subset \frb\times\frn\) is covered by the orbits affine spaces \(B(\frh+\mathbb{A}_\bfS)\), \(\bfS\in NS_n\).
 Moreover, the points in \(\frh+\mathbb{A}_\bfS\) have trivial stabilizers.
 \end{proposition}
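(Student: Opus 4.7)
The plan is to combine a greedy construction of an adapted basis (exploiting cyclicity) with Borel normalization, and then read off triviality of the stabilizer from the normalized equations.

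For the covering claim, I would proceed by downward induction on the flag level. Given a cyclic triple $(X, Y, v)$ representing a point of $\widetilde{\Hilb_{1,n}^{free}}$, the condition $\CC\langle X, Y\rangle v = V$ lets me build an adapted basis level by level. At each level $i$, going from $i = n-1$ down to $i = 1$, the quotient $V/F_i$ has dimension $n-i$ and is cyclically generated from the image of $v$, so I can assign a label $x$ or $y$ to each index $j > i$ depending on whether the corresponding basis vector modulo $F_i$ is obtained by applying $X$ or $Y$ to a vector lower in the flag; this determines whether $j \in S_x^i$ or $j \in S_y^i$. Nestedness follows because each label is persistent across levels (once $j$ is labelled $x$ it stays $x$, since the alternative would contradict disjointness together with the nesting $S_y^{i+1} \subset S_y^i$), and the cardinality partition $|S_x^i| + |S_y^i| = n-i$ is forced by dimension. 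With $S$ determined, I apply an element $b \in B$ to rescale each chosen pivot entry of $X$ and $Y$ to $1$ and to clear the entries $x_{i-1, j}$ for $j \in S_x^i$ and $y_{i-1, j}$ for $j \in S_y^i$; the normalized result lies in $\frh + \mathbb{A}_S$.

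For the trivial-stabilizer claim, suppose $b \in B$ fixes a point of $\frh + \mathbb{A}_S$ (together with the cyclic vector $v$, as dictated by the ambient definition). Writing $b = d \cdot u$ with $d$ diagonal and $u \in U$ unipotent, the equations $(bXb^{-1})_{ij} = X_{ij}$ applied to the pivot entries give $d_i = d_j$ for every $(ij) \in P_x(S)$, and similarly from $Y$. A direct combinatorial check using the persistence of labels shows that $P_x(S) \cup P_y(S)$ is exactly the path $\{(j-1, j) : j = 2, \ldots, n\}$, so all $d_j$ coincide; the normalization $bv = v$ (or equivalently the $\CC^*$-quotient used to define $\widetilde{\Hilb_{1,n}^{free}}$) then pins this common value to $1$, giving $d = 1$. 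With $d = 1$, the preservation of the zero and pivot conditions propagates through $u$ and forces $u = 1$ entry by entry, by a downward induction on the column index within each row.

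The main obstacle is the combinatorial bookkeeping: one must verify carefully that the greedy assignment at each flag level genuinely produces an element of $NS_n$ (nestedness, disjointness, cardinality) and that the successive normalizations in different rows do not clash. A cleaner reformulation bypasses this by labelling $(X, Y, v)$ directly via the rank pattern of the induced quotient maps $\bar{X}, \bar{Y}: V/F_{i+1} \to V/F_i$: the sets $S_x^i, S_y^i$ are then read off canonically from the point, the covering statement becomes formal, and the stabilizer computation reduces to the pivot-connectivity argument above.
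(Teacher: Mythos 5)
The paper does not prove this proposition here; it is cited from \cite{OR16}, so there is no in-paper argument to compare against. On its own terms, your stabilizer argument contains a genuine error: you assert that $P_x(S) \cup P_y(S)$ is always the superdiagonal path $\{(j-1, j) : j = 2, \ldots, n\}$. That would hold if $S_x^i$ and $S_y^i$ were required to be disjoint---and your checklist ``nestedness, disjointness, cardinality'' suggests you assumed this---but disjointness is not part of the definition of $NS_n$. The paper's own count $|NS_n| = n!$ rules it out, since disjointness would leave only $2^{n-1}$ elements. Concretely, for $n = 3$ the data $S_x^1 = \{3\}$, $S_x^2 = \emptyset$, $S_y^1 = \{3\}$, $S_y^2 = \{3\}$ (all other $S_\bullet^i = \emptyset$) is a valid element of $NS_3$, and its pivots are $P_x(S) = \{(1,3)\}$, $P_y(S) = \{(2,3)\}$, which do not lie on the superdiagonal. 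So the stated reason that all $d_j$ coincide does not apply to all charts.

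The conclusion you want is still true, but for a different combinatorial reason: for each $i \leq n-1$ there is exactly one pivot $(i, \pi(i))$ with $\pi(i) > i$, because $|S_x^i \setminus S_x^{i+1}| + |S_y^i \setminus S_y^{i+1}| = (n-i) - (n-i-1) = 1$; and iterating $\pi$ from any starting vertex strictly increases and hence reaches $n$, so the pivot graph on $\{1, \ldots, n\}$ is a spanning tree with $n-1$ edges---connected, but not in general the path. Connectivity, not the path structure, is what forces $d_1 = \cdots = d_n$. The same oversight infects your unipotent step: the proposed ``downward induction on the column index within each row'' was implicitly using the superdiagonal picture, and needs to be re-argued so that it tracks the genuinely variable positions of the pivots and forced zeros determined by a general $S \in NS_n$ (a traversal following the tree structure of the pivot graph would be a natural way to organize this). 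The covering half of your argument is closer in spirit to what one would do, though the greedy basis construction and the simultaneous normalization across rows are asserted rather than carried out and would need the same level of care.
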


 Thus the proposition implies that the affine subspaces \(\frh+\mathbb{A}_\bfS\) provide an affine cover for the quotient \(\Hilb_{1,n}^{free}\). Our system for labeling of the charts
 might look a bit artificial for people studying Hilbert  schemes so let us introduce an equivalent but somewhat more familiar system.

 \subsection{Combinatorics of the cover}
\label{sec:combinatorics-cover}

Also it is probably a good place to enrich our notations to make them more compatible with the notations in \cite{GorskyNegutRasmussen16}. The free Hilbert  scheme  has a natural map \(\rho:
 \Hilb_{1,n}^{free}\rightarrow \frh\) given by the eigenvalues of the first matrix. Respectively, we define \(\Hilb_{1,n}^{free}(Z)\) to be the pre-image \(\rho^{-1}(Z)\).

 Now recall that another definition of the free Hilbert  scheme  as the space of the
 nested chains of the left ideals: \[\Hilb_{1,n}^{free}=\{I_n\subset\dots\subset I_1\subset I_0=\CC\langle X,Y\rangle|\CC\langle X,Y\rangle/I_i=\CC^i\rangle\}.\]
 Given a sequence of non-commutative monomials \(\vec{m}=(m_1,\dots,m_n)\) we define the following sublocus of the free Hilbert  scheme
 \[\mathbb{A}_{\vec{m}}=\{I_\bullet|\CC\langle X,Y\rangle/I_k=\langle m_1,\dots,m_k\rangle\}.\]

 Now let us explain how one could produce a vector of monomials \(\vec{m}(\bfS)\) from the element of \(\bfS\in NS_n\). Essentially, we just retrace the definition of the free Hilbert  scheme.
 We construct the vector inductively starting with \(m_1(\bfS)\) which is \(X\) if \((n-1,n)\in P_x(\bfS)\) and it is \(Y\) if \((n-1,n)\in P_y(\bfS)\). The inductive step is the following:
 \[m_k(\bfS)=
   \begin{cases}
     Xm_{n-j}(\bfS),\mbox{ if } &(k,j)\in P_x(\bfS)\\
     Ym_{n-j}(\bfS),\mbox{ if } &(k,j)\in P_y(\bfS).
   \end{cases}
\]

In the case of the usual nested Hilbert  scheme  it is convenient to label the torus fixed points by the standard Young tableaux (SYT).
By analogy with the commutative case we also introduce an analog of the SYT for non-commutative case. The generalized SYT, abbreviated \(GYT_n\),  
are labeling  \(L\) of \(\ZZ_{\geq 0}\times\ZZ_{\geq 0}\) by the subsets of \([1,n]\) such that every element appears once in the labeling sets.
That is  an element of \(GYT_n\) is a map \(L:\ZZ_{\geq 0}\times\ZZ_{\geq 0}\rightarrow \mbox{ subsets of } [1,n]\) with above mentioned properties.

It is natural to think about the labels as the labels on \(1\times 1\) squares that pave the first quadrant. We also require that the set of squares with
non-empty labeling is connected, other words all our generalized tableaux are connected. The standard Young tableaux are examples of generalized YT but obviously there are
many GYT which are not SYT.

There is a natural map \(GYT: NS_n\rightarrow GYT_n\) that could be described by the condition  \(k\in L(GYT(\bfS))(ij)\) if \(\deg_X(m_k(\bfS))=i\) and \(\deg_Y(m_k(\bfS))=j\).
Since the non-commutative Hilbert  scheme  contains the commutative one the image of the above map contains the set \(SYT_n\). But we do not understand
the combinatorics well. For example we do not understand the image of this map, the answer to following question is probably known to the experts:

{\bf Question:} What is the image of the map \(NS_n\rightarrow GYT_n\)?  Is this map injective?

We checked the injectivity for small \(n\) on computer. Let us also give a few examples of GYT's that are not SYT and appear in the image:
\[\begin{matrix}
  1&2& \\3&4&5\\6&&
\end{matrix}\quad\quad\quad
\begin{matrix}
  1&2\\4&*
\end{matrix}
\quad\quad\quad
\begin{matrix}
  1&2&6\\3&&7\\4&5&
\end{matrix}
\]
where \(*=\{3,5\}\).

Finally, let us observe that size of the set \(NS_n\) is \(n!\) and we expect that that there is a natural correspondence between this set
and permutations \(\mathfrak{S}_n\) of \([1,n]\). On other hand the RS algorithm assigns to an element of \(\mathfrak{S}_n\) a pair of SYT of the same shape.
Thus we expect existence of modification of the map \(GYT\) that has as target the set of the pairs from RS algorithm.
We leave this problem for the future publications where we plan to study the connection between the geometry of the non-commutative Hilbert  scheme
and the Young projectors in \(\CC[\mathfrak{S}_n]\).

\subsection{Geometry of the torus fixed locus}
Given a element \(\bfS\in NS_n\) we denote by \(M_x(\bfS)\) and \(M_y(\bfS)\) the corresponding pair of matrices from \(\widetilde{\Hilb_{1,n}^{free}}\). The entries
\(x_{ij}\), \(ij\in N_x(\bfS)\) and \(y_{ij}\), \(ij\in N_y(\bfS)\) together with coordinates along \(\frh\) provide  local coordinates at the neighborhood of the point
\(M_x(\bfS),M_y(\bfS)\). Below we provide a formula for weights of the \(T_{sc}\)-action on these coordinates.

First let us define the pair of vectors of weights \(w_x(\bfS)\) and \(w_y(\bfS)\). We define them inductively, starting with \(w_x^n(
\bfS)=0\) and \(w_y^n(\bfS)=0\).
The inductive step is provided by
\[w_x^j(\bfS)=
  \begin{cases}
    w_x^k(\bfS)+1& \mbox{ if } (jk)\in P_x(\bfS)\\
    w_x^k(\bfS)&\mbox{ if } (jk)\in P_y(\bfS)
  \end{cases},\quad
w_y^j(\bfS)=
  \begin{cases}
    w_y^k(\bfS)+1& \mbox{ if } (jk)\in P_y(\bfS)\\
    w_y^k(\bfS)&\mbox{ if } (jk)\in P_x(\bfS)
  \end{cases}
\]

The weights above are defines in such way that
\[t^{-1}\Ad_{t_x}(X)\in \mathbb{A}_{\bfS},\quad \Ad_{t_x}(Y)\in \mathbb{A}_\bfS,\]
\[\Ad_{t_y}(Y)\in\mathbb{A}_\bfS,\quad t^{-1}\Ad_{t_y}(Y)\in \mathbb{A}_\bfS,\]
for any \((X,Y)\in \mathbb{A}_\bfS\) and \(t_x=diag(t^{w_x^1},\dots,t^{w_x^n})\),
\(t_y=diag(t^{w_y^1},\dots,t^{w_y^n})\).

From discussion it immediate that the  weights of the action are given by the formula:
\[d_x(ij)=w^i_x-w^j_x+1,\quad d_y(ij)=w_y^i-w_y^j,\quad ij\in N_x(\bfS)\]
\[d_x(ij)=w^i_x-w^j_x,\quad d_y(ij)=w_y^i-w_y^j+1,\quad ij\in N_y(\bfS).\]

Now let us write a localization formula
for \(\chi(K^{even}_{cox_S}\otimes \calL^{\vec{k}}\otimes\Lambda_a(\mathcal{B}))\).
For localization formula we need the weights of the differentials in the complex. Informally we call these weights as {\it weights of obstruction space}:
\[o_x(ij)=w^i_x-w^j_x+1,\quad o_y(ij)=w^i_y-w^j_y+1.\]

We denote by \(T_\bfS\) the tangent space at \((M_x(\bfS),M_x(\bfS))\) and by \(Ob_\bfS\) the 'obstruction' space spanned by the vectors with weights \(o(ij)\), \(i-j>0\).

Armed with the above formulas we can write the localization formula  for \(\sum_i\chi(K^{even}_{cox}\otimes \calL^{\vec{k}}\otimes \Lambda^i\calB)a^i\) as
\[\sum_{\bfS\in NS_n}Q^{\vec{k}\cdot w_x}T^{\vec{k}\cdot w_y}\Omega_\bfS(Q,T,a;\cox_S),\]
\[\Omega_\bfS(Q,T,a;\cox_S)=\frac{(1-Q)^{n-|S|}\prod_{ij\in
      \tilde{S}}(1-Q^{o_x}T^{o_y})}{\prod_{ij\in N_x(S)}(1-Q^{d_x}T^{d_y})\prod_{ij\in N_y(S)}(1-Q^{d_x}T^{d_y})}
  \prod_{i=1}^{n-1}(1-a Q^{w^i_x}T^{w^i_y})\]
where \(\tilde{S}\) is given by \eqref{eq:tilS},
\(o_x=o_x(ij),o_y=o_y(ij)\),\(d_x=d_x(ij),d_y=d_y(ij)\) and \(Q,T\) variables are related to the standard variables \(q,t\) by
\[Q=q^2,\quad T=t^2/q^2.\]

Unfortunately, the sum above is not well-defined because for some \(\bfS\) the vector \((d_x,d_y)\) vanishes. It is a manifestation of the fact that scheme  \(\left(\Hilb_{1,n}^{free}\right)^{T_{sc}}\)
is not zero-dimensional. For example the family of the matrices:
\[X=\begin{bmatrix}
 0&u&0&1\\0&0&1&0\\0&0&0&0\\0&0&0&0
\end{bmatrix},\quad
Y=
\begin{bmatrix}
  0&0&0&0\\0&0&0&0\\0&0&0&1\\0&0&0&0
\end{bmatrix}
\]
where \(u\) is any, lies inside \(\mathbb{A}_\bfS\) for \(\bfS\) with \(\bfS_x=\{4,3\}\supset\{3\}\supset\{\emptyset\}\supset\{\emptyset\}\) and
\(\bfS_y=\{4\}\supset\{4\}\supset\{4\}\supset\{\emptyset\}\). It is also fixed by the
torus \(T_{sc}\).

\begin{remark}
  As we see above the torus fixed locus is not discrete in general but we expect that the locus will of virtual dimension zero. Indeed, the computer experiment suggest that for
  any \(\bfS\in NS_n\) we have inequality:
  \[\dim \left(Ob_\bfS\right)^{T_{sc}}\geq \dim \left(T_\bfS\right)^{T_{sc}}.\]
\end{remark}

However on the commutative Hilbert  scheme  the torus fixed locus is zero-dimensional and the torus fixed points are labeled by the \(SYT_n\). Let us identify the corresponding
subset \(NS_n\):
\[ [M_x(\bfS),M_y(\bfS)]=0,\mbox{ iff } \bfS\in NS_n^{syt}.\]

We propose the following
\begin{proposition}\label{prp:even} For sufficiently positive \(\vec{k}\) we have the following localization formula for
  \[\mathcal{P}^{even}(L(\cox_S\cdot \delta^{\vec{k}}))=\sum_{\bfS\in NS_n^{syt}}Q^{\vec{k}\cdot w_x}T^{\vec{k}\cdot w_y}\Omega_\bfS(Q,T,a;cox_S).\]
\end{proposition}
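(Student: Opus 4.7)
The plan is to realize $\mathcal{P}^{even}(L(\cox\cdot \delta^{\vec{k}}))$ as a genuine $T_{sc}$-equivariant Euler characteristic on $\Hilb^{free}_{1,n}$ and then apply Atiyah--Bott--Lefschetz localization. Concretely, the corollary in Section~\ref{sec:conj-coxet-links} and the triviality of $B$-stabilizers on the cover $\{B\cdot(\frh+\mathbb{A}_S)\}_{S\in NS_n}$ let me identify $\mathcal{P}^{even}$ with
\[
\sum_{i,j}(-1)^{j}\dim_{q,t}\,H^{j}\!\bigl(\Hilb^{free}_{1,n},\,\mathbb{K}^{even}_{\cox}\otimes \calL^{\vec{k}}\otimes\Lambda^{i}\calB\bigr)a^{i},
\]
where $\mathbb{K}^{even}_{\cox}$ is the two-periodic Koszul complex descending from $K^{even}_{\cox}$. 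For $\vec{k}$ sufficiently positive the Čech complex is concentrated in degree zero, so the alternating sum is independent of the cover and yields a Laurent polynomial in $(Q,T,a)$ which is an honest equivariant character.

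Second, I would apply Atiyah--Bott--Lefschetz: the character above decomposes as a sum over the connected components $F_S\subset \frh+\mathbb{A}_S$ of the $T_{sc}$-fixed locus, each contribution being the equivariant pushforward to a point of the restriction of $\mathbb{K}^{even}_{\cox}\otimes \calL^{\vec{k}}\otimes\Lambda^{i}\calB$, divided by the equivariant Euler class of the normal bundle. When $S\in NS_n^{syt}$, the pair $(M_x(S),M_y(S))$ commutes, so $F_S$ is an isolated reduced point, the normal bundle weights are precisely $(d_x(ij),d_y(ij))$ on $N_x(S)\cup N_y(S)$, and the Koszul complex $\mathbb{K}^{even}_{\cox}$ restricts to $\prod_{j-i>1}(1-Q^{o_x(ij)}T^{o_y(ij)})$ at the fiber. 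Combined with the weights $Q^{\vec{k}\cdot w_x}T^{\vec{k}\cdot w_y}$ of $\calL^{\vec{k}}$ and $\prod_{i}(1-aQ^{w^{i}_{x}}T^{w^{i}_{y}})$ of $\Lambda_{a}\calB$, this reproduces exactly the summand $Q^{\vec{k}\cdot w_x}T^{\vec{k}\cdot w_y}\Omega_{S}(Q,T,a)$ in the statement.

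The remaining and hardest step is to show that the non-SYT components $F_S$ contribute zero. For such $S$, at least one commutator entry $[M_x(S),M_y(S)]_{ij}$ with $i-j>1$ is nonzero, and the component $F_{S}$ lies outside the set-theoretic support of $\mathbb{K}^{even}_{\cox}$ along that $T_{sc}$-invariant direction. The plan is to bootstrap off the virtual-dimension-zero statement in the remark following the statement: the inequality $\dim (Ob_{S})^{T_{sc}}\ge \dim (T_{S})^{T_{sc}}$ should be strict precisely when $S\notin NS_{n}^{syt}$, and each excess $T_{sc}$-invariant obstruction produces a vanishing factor $(1-Q^{0}T^{0})=0$ in the numerator of the localization formula before any cancellation with the normal-bundle denominator. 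In practice I would verify this by factoring the restricted Koszul complex on each $F_{S}$ into a piece along the moving directions (whose contribution is the ratio of obstruction and tangent characters, well-defined and polynomial) times a piece along the fixed directions (which is identically zero for $S\notin NS_{n}^{syt}$ because at least one $[M_x(S),M_y(S)]_{ij}$ vanishes in a $T_{sc}$-invariant weight).

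The main obstacle will be this last vanishing: it requires a careful weight-by-weight comparison of the tangent space $T_{S}$ and the obstruction space $Ob_{S}$ at every non-SYT chart, and a proof (rather than the computer experiment alluded to in the remark) that the $T_{sc}$-invariant part of $Ob_{S}$ strictly dominates that of $T_{S}$ off $NS_{n}^{syt}$. This is an essentially combinatorial statement about the tree of pivot data $(P_{x}(S),P_{y}(S))$, and I expect to address it by an inductive argument on the size of $S_{x}^{1}\cup S_{y}^{1}$, peeling off the deepest pivot and tracking how the invariants of tangent and obstruction weights evolve.
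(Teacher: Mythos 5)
Your overall setup (realize $\mathcal{P}^{even}$ as an equivariant Euler characteristic on $\Hilb^{free}_{1,n}$ and localize) is in the right spirit, and your treatment of the $S\in NS_n^{syt}$ contributions matches the paper's formula. However, for the non-SYT components your proposal diverges from the paper and contains a genuine gap. You propose to handle the components $F_S$ with $S\notin NS_n^{syt}$ by proving a \emph{strict} version of the virtual-dimension inequality $\dim(Ob_S)^{T_{sc}}\ge\dim(T_S)^{T_{sc}}$ and then arguing that the excess $T_{sc}$-invariant obstructions force a zero in the numerator of $\Omega_S$ that survives cancellation. This is precisely the part the paper does \emph{not} prove (the inequality is stated only as a computer-checked remark, and even there it is not asserted to be strict off $NS_n^{syt}$), and even granting strictness, making the $0/0$ bookkeeping rigorous when the fixed component is positive-dimensional requires a genuine virtual-localization argument that you have only sketched.

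The paper sidesteps all of this with a much simpler support argument, which you in fact glance at but then abandon. By the main theorem of \cite{OR16}, the complex $\mathbb{S}_\beta$ has its cohomology supported on the commutative nested Hilbert scheme $\Hilb_{1,n}\subset\Hilb^{free}_{1,n}$. For $S\notin NS_n^{syt}$ the point $(M_x(S),M_y(S))$ is non-commutative, so $\mathbb{S}_\beta$ is exact (contractible) in a neighborhood of it. Since $\Hilb'_{1,n}:=\bigcup_{S\in NS_n^{syt}}\mathbb{A}_S$ is an open subset containing the support, restricting to it does not change the total (hyper)cohomology; and on $\Hilb'_{1,n}$ the $T_{sc}$-fixed locus is honestly zero-dimensional, so the classical Atiyah--Bott formula applies with no degenerate factors at all. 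You should replace the virtual-dimension bootstrap with this restriction-to-the-open-support step; the combinatorial inequality you flag as the main obstacle then simply never enters the proof.
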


\begin{proof}
  Let \(\beta=\cox\cdot \delta^{\vec{k}}\). Since the complex \(\mathbb{S}_\beta\) is supported on the commutative Hilbert  scheme, the complex \(\mathbb{S}_\beta\) is contractible in the affine neighborhood of
  \((M_x(\bfS),M_y(\bfS))\) if \(\bfS\notin SYT_n\). The union \(\Hilb'_{1,n}:=\bigcup_{\bfS\in NS_n^{syt}}\mathbb{A}_\bfS\) is open subset inside \(\Hilb_{1,n}^{free}\) and by previous remark the restriction on this
  open subset does not change the total homology. Since \(T_{sc}\)-fixed locus inside \(\Hilb'_{1,n}\)  is zero-dimensional and the formula in the proposition is the standard localization formula.
\end{proof}

The above formula  is equivalent to the formula  from  corollary 1.3 of \cite{GorskyNegutRasmussen16}: the formula in \cite{GorskyNegutRasmussen16} is also a sum over SYT's and  the
corresponding terms in \cite{GorskyNegutRasmussen16} and in our formula  coincide after  we cancel the matching factors
in the numerator and denominator.

Besides similarity to the previous conjectures there are other observations that support the conjecture. For example, it is elementary to show that
\[ [M_x(\bfS),M_y(\bfS)]=0,\mbox{ iff } \bfS\in SYT_n.\]
Hence since the complex \(\calC_\beta\) is supported on the commutative Hilbert  scheme, the complex \(\calC_\beta\) is contractible in some affine neighborhood of
\((M_x(\bfS),M_y(\bfS))\) if \(\bfS\notin SYT_n\)

\subsection{Fourth grading and localization}
\label{sec:fourth-grad-local}

In this section we provide an explanation for the even super-polynomial \(\calP^{even}\) as well as some conjectures for cases when the even super-polynomials coincide with
the usual super-polynomial. As we explain below both \(\calP^{even}\) and \(\calP\) are specializations of a conjectural richer invariant.

As it is explained \cite{OR16} and outlined in section~\ref{sec:conj-coxet-links}, for  a braid \(\beta\in \Br\) one can construct an element \(\bar{\calC}_\beta
\in \MF_{B^2}(\calXr_2,\Wr)\). To compute the triply-graded homology of the link closure \(L(\beta)\), we need to work with \(\mathbb{S}_\beta=j_e^*(\bar{\calC}_\beta)\).

Two periodic complex \(\mathbb{S}_\beta\in \MF^{str}_B(\widetilde{\Hilb_{1,n}},0)\) has differential of degree \(t\) with respect to \(T_{sc}\)-action. It is shown in \cite{OblomkovRozansky18a} that
\(\bar{\calC}_\beta\) is isomorphic to a strictly \(B^2\)-equivariant matrix  factorization, thus we can assume that \(\mathbb{S}_\beta\in \MF^{str}_B(\widetilde{\Hilb}_{1,n},0)=\mathrm{D}^{per}_{T_{sc}}(\Hilb_{1,n}^{free}).\)

The objects of the derived category \(\mathrm{D}^{per}_{T_{sc}}(\Hilb_{1,n}^{free})\) are two periodic complexes of coherent \(T_{sc}\)-equivariant sheaves with
differentials of degree \(t\) with respect to \(T_{sc}\). It is more natural to consider category \(\mathrm{D}^b_{T_{sc}}(\Hilb_{1,n}^{free})\) of bounded complexes
of \(T_{sc}\)-equivariant coherent sheaves with differentials of degree \(t\). There is a folding functor that relates these categories:
\[\mathrm{Fold}:\quad \mathrm{D}^b_{T_{sc}}(\Hilb_{1,n}^{free})\to \mathrm{D}^{per}_{T_{sc}}(\Hilb_{1,n}^{free}),\quad \calC\mapsto \oplus_{n\in \ZZ} \calC[2n],\]
where \([n]\) is a notation for the homological shift and \(\calC\) is a complex of locally-free sheaves.

Clearly, not all objects in \(\mathrm{D}^{per}_{T_{sc}}(\Hilb_{1,n}^{free})\) are foldings of bounded complexes. However, as explained in the previous sections
\(\mathbb{S}_{\cox_S}=\mathrm{Fold}(K_{\cox_S})\), where treat formula \eqref{eq:Kcox} for \(K_{\cox_S}\) is interpreted as a tensor product of bounded complexes.
Let us call the two-periodic complexes in the image of \(\mathrm{Fold}\) {\it unrollable}.

It is an interesting question, for which \(\beta\in \Br_n\) the corresponding  two-periodic complex
\(\mathbb{S}_\beta\) is unrollable. For example, a two-periodic complex for the half-twist on three strands \(\beta=\sigma_1\cdot\sigma_2\cdot\sigma_1\), the two-periodic complex \(\mathbb{S}_{\beta}\) does not appear to be unrollable, see \cite{OR16}.

Let us define  \(\mathrm{D}^{b}_{T_{sc}}(\Hilb_{1,n}^{free})_{even}\) to  be  a derived category of bounded complexes of \(T_{sc}\)-equivariant complexes of coherent
sheaves with \(T_{sc}\) invariant differentials. There is a shifting functor that relates the last two categories:
\[\mathrm{Sh}_{even}: \mathrm{D}^{b}_{T_{sc}}(\Hilb_{1,n}^{free})\to \mathrm{D}^{b}_{T_{sc}}(\Hilb_{1,n}^{free})_{even},\quad\oplus_i\calC_i\mapsto \oplus_i\mathbf{t}^i\cdot\calC_i,\]
where \(\calC=(\oplus\calC_i,D)\), \(D:\calC_i\to \calC_{i+1}\).

In the context of the paper, the relevant example is \(\mathrm{Sh}_{even}(K_{\cox_S})=K_{\cox_S}^{even}\) where we interpret formula \eqref{eq:coxSeven} for \(K_{\cox_S}^{even}\) as
tensor product of bounded complexes. That motivates us to define a super-polynomial of four-variables. Suppose \(\mathbb{S}_{\beta}=\mathrm{Fold}(\hat{\mathbb{S}}_\beta)\),
\(\hat{\mathbb{S}}_\beta\in \mathrm{D}^{b}_{T_{sc}}(\Hilb_{1,n}^{free})\) for some \(\beta\in \Br_n\) then we define
\[\mathfrak{P}(\beta)=\sum_{i,j} h^{j}\dim_{q,t}H^j(\mathrm{Sh}_{even}(\hat{\mathbb{S}}_\beta\otimes \Lambda^i\calB)).\]

Thus \(\mathfrak{P}(\beta)\) is a common generalization of \(\calP(L(\cox_S\cdot \delta^{\vec{k}}))\) and of \(\calP^{even}(L(\cox_S\cdot\delta^{\vec{k}}))\):
\begin{equation}\label{eq:even-4gr}
\calP(L(\cox_S\cdot\delta^{\vec{k}}))=\mathfrak{P}(\cox_S\cdot\delta^{\vec{k}})|_{h=t^{-1}},\quad \calP^{even}(L(\cox_S\cdot\delta^{\vec{k}}))=\mathfrak{P}(\cox_S\cdot\delta^{\vec{k}})|_{h=-1}\end{equation}

\begin{proposition}\label{prop:vanish}
  The following statements are equivalent
  \begin{enumerate}
  \item \(\calP^{even}(L(\beta))=\calP(L(\beta))\),
  \item \(\mathfrak{P}(\beta)=\calP(L(\beta))\),
  \item \(\mathfrak{P}(\beta)=\calP^{even}(L(\beta))\),
    \item \(H^j(\mathrm{Sh}_{even}(\hat{\mathbb{S}}_\beta\otimes \Lambda^i\calB))=0\) for \(j\ne 0\) for all \(i\).
  \end{enumerate}
\end{proposition}
\begin{proof}
  The last three conditions are formally equivalent. Also the last condition implies the first one. Let us show that the first condition implies the last one.
  Indeed,  \(\calP^{even}(L(beta))|_{t=1}=\calP(L(\beta))|_{t=1}\) implies that vanishing of \(H^j(\mathrm{Sh}_{even}(\hat{\mathbb{S}}_\beta\otimes \Lambda^i\calB))\)
  for odd \(j\). Thus both \(\calP^{even}(L(\beta))\) and \(\calP(L(\beta))\) are sum of monomials of \(q\) and \(t\) with positive coefficients.
  Hence the formula \eqref{eq:even-4gr} implies the statement.
\end{proof}

As explained above the super-polynomial \(\calP^{even}(L(\cox_S\cdot\delta^{\vec{k}}))\) can be computed by localization technique. Moreover, using different methods we show
in \cite{OblomkovRozansky18a}

\begin{proposition}\cite{OblomkovRozansky18a} For any \(\vec{k}\in \ZZ_{>0}\) such that \(k_1>k_2>\dots>k_{n-1}\) there is \(M\) such that for any
  \(m>M\) we have:
\[\calP(\delta^{\vec{k}}\cdot FT^m)=\calP^{even}(\delta^{\vec{k}}\cdot FT^m).\]  
\end{proposition}

Thus combination of the last proposition and proposition~\ref{prp:even} implies theorem~\ref{thm:localization}.

\subsection{Conjectures}
\label{sec:conjectures}
Motivated by the previous section we state some vanishing conjectures for super-polynomial \(\mathfrak{P}(\cox\cdot \delta^{\vec{k}})\).

The structure sheaf of \(\widetilde{\Hilb}_{1,n}^{free}\) twisted by \(B\)-character \(\chi\) descend to a line bundle on \(\Hilb_{1,n}^{free}\).
Let us denote this line bundle by \(\calL^{\vec{k}}\). The line bundle \(\calL^{\vec{1}}\) corresponds to \(FT\). Based on the discussion in \cite{GorskyNegutRasmussen16} and
constructions in \cite{OblomkovRozansky18a} we propose

\begin{conjecture}\label{conj:torus}
  For any \(\vec{k}\in \ZZ_{>0}^{n-1}\) such that \(k_i\ge k_{i+1}-1\), \(i=1,\dots,n-2\) there is \(M\) such that
  \(H^j(\mathrm{Sh}_{even}(\calL^{\vec{k}+r\vec{1}}\otimes\hat{\mathbb{S}}_{\cox})\otimes \Lambda^i\calB))=0,\)
  for any \(i\), \(j\ne 0\) and \(r>M\).
\end{conjecture}

As we mentioned before for any \(m,n\), \((m,n)=1\) there is \(\vec{k}\) such that
\(L(\cox\cdot \delta^{\vec{k}})=T_{m,n}\) is an \(m,n\) torus knots. Previous studies of
the homology of torus knots and the related combinatorics allow us to provide an evidence for the above conjecture.

\begin{proposition}
  Suppose \(L(\cox\cdot \delta^{\vec{k}})=T_{m,n}\) then the conjecture~\ref{conj:torus} holds.
\end{proposition}
\begin{proof}
  It was shown in \cite{OblomkovRozansky20} that for any \(\beta\in \Br_n\) \(\calP(L(\beta))\) is equal to the super-polynomial for the Khovanov-Rozansky homology.
  On the other hand the Khovanov-Rozansky super-polynomial for \(T_{m,n}\) was computed in \cite{HogancampMellit19} and it is shown in \cite{M16} that this super-polynomial is equal to the super-polynomial from  proposition~\ref{prp:even}.

  Finally, let us notice that \(L(\cox\cdot\delta^{\vec{k}+r\vec{1}})=T_{m+rn,n}\)
  Thus we have equality \[\calP(L(\cox\cdot\delta^{\vec{k}+r\vec{1}}))=\calP^{even}(L(\cox\cdot\delta^{\vec{k}+r\vec{1}}))
  \] for all \(r\ge 0\) and the statement follows from proposition~\ref{prop:vanish}.
\end{proof}


\end{document}